\newtheorem{theorem}{Theorem}[section]
\newtheorem{lemma}[theorem]{{\bf Lemma}}
\newtheorem{corollary}[theorem]{{\bf Corollary}}
\newtheorem{remark}[theorem]{{\bf Remark}}
\newtheorem{definition}{Definition}[section]
\numberwithin{equation}{section}
\newenvironment{proof}{\indent{\em Proof:}}{\quad \hfill
$\Box$\vspace*{2ex}}
\begin{document}
\setcounter{page}{1}
\begin{center}
\vspace{0.4cm} {\large{\bf On Fractional Volterra integrodifferential equations with fractional integrable impulses}} \\
\vspace{0.35cm}

Sagar T. Sutar  $^{1}$\\
sutar.sagar007@gmail.com\\
\vspace{0.35cm}
Kishor D. Kucche $^{2}$ \\
kdkucche@gmail.com \\

$^{1}$ Department of Mathematics, Vivekanand College, Kolhapur, Maharashtra, India.\\

$^{2}$ Department of Mathematics, Shivaji University, Kolhapur-416 004, Maharashtra, India.\\
\end{center}

\def\baselinestretch{1.0}\small\normalsize
\begin{abstract}
We consider a class of nonlinear fractional Volterra integrodifferential equation with fractional integrable impulses and investigate the existence and uniqueness results in the Bielecki's normed Banach spaces. Further, Bielecki--Ulam type stabilities have been demonstrated on a compact interval. A concrete example is  provided to  illustrate the outcomes we acquired.
\end{abstract}
\noindent\textbf{Key words:}  Fractional Volterra integrodifferential equation, Integral Impulses, Banach contaction principle, existence of solutions, Bielecki norm, Bielecki-Ulam type Stability \\
\noindent
\textbf{2010 Mathematics Subject Classification:} 34A37, 45M10, 34G20, 34A08.
\def\baselinestretch{1.0}
\allowdisplaybreaks

\section{Introduction}
Famous ``Ulam stability problem" of functional differential equation  raised by Ulam  \cite{Ulam} have been extended to different kinds of equations.  Wang et al.\cite{Wang2} are the first mathematicians who investigated the Ulam stability and data dependence for fractional differential equations. Thereafter, several interesting works on different Ulam type stabilities of fractional differential and integral  equations  have been reported (see for instance \cite{WangZ},\cite{Wei},\cite{WangLi},\cite{Eghbali},\cite{Wang1}).

An overview pertaining to impulsive differential equations with instantaneous impulses and its applicability in the practical dynamical systems have provided in the monograph \cite{Sam,Bain,Nt}. The impulsive differential equations with time variable impulses dealt in an interesting papers \cite{Fri,Fri1,Fri2}.  Wang et al. \cite{wang2,WangLZ} studied  existence and uniqueness of
solutions  and established generalized $\beta$-Ulam-Hyers-Rassias stability to differential equations with not instantaneous impulses in a $P\beta$-normed Banach space. 

Recently, Wang and Zang \cite {wang1} introduced a new class of impulsive differential equations of the form
\begin{equation*}
\begin{cases} 
x'(t) =  f\left(t,x(t)\right),\, t\in(s_{i},t_{i+1}],\;
i=0,1,\cdots,m,\\
x(t)  =  I^{\beta}_{t_{i},t}h_{i}(t,x(t)), ~ t\in(t_{i},s_{i}],\; i=1,2,\cdots ,m, ~\beta\in(0,1).
\end{cases}
\end{equation*}
and examined the existence and uniqueness of solutions in Bielecki's normed Banach spaces. Further, demonstrated that the corresponding equations are Bielecki-Ulam's-Hyer stable. It is seen that such sort of  formulations are adequate to depict the memory procedures of the drugs in the circulation system and the subsequent absorption for the body.

Motivated by the work of Wang and Zang \cite {wang1}, we consider the following class of nonlinear fractional  Volterra integrodifferential equation with fractional order integral impulses of the form
\begin{equation}\label{e1.1}
\begin{cases} 
^c_{\sigma_{i}}\mathcal{D}_{\tau}^\alpha x(\tau) =  f\left(\tau,x(\tau),\int_{\sigma_{i}}^{\tau}h(\sigma,x(\sigma))d\sigma\right),\, \tau\in(\sigma_{i},\tau_{i+1}],\; i=0,1,\cdots,m,~\alpha\in(0,1),\\
x(\tau)  =  \mathcal{I}^{\beta}_{\tau_{i},\tau}h_{i}(\tau,x(\tau)), ~ \tau\in(\tau_{i},\sigma_{i}],\; i=1,2,\cdots ,m, ~\beta\in(0,1),
\end{cases}
\end{equation}
and research the   existence and uniqueness of solutions and examine the outcomes relating to  Bielecki--Ulam type stabilities viz. Bielecki--Ulam--Hyers and Bielecki--Ulam--Hyers--Rassias stabilities on a compact interval.
Here  $\tau_i$ and $\sigma_i$ are pre-fixed numbers satisfying $ 0=\tau_{0}=\sigma_{0}<\tau_{1}\leq \sigma_{1}\leq \tau_{2}<\cdots< \tau_{m}\leq \sigma_{m}< \tau_{m+1}=T$,~$ f:[0,T]\times \mathbb{R}\times \mathbb{R}\to\mathbb{R},\; h:[0,T]\times \mathbb{R}\to\mathbb{R}$
and for each $i=1,2\cdots m,\; h_{i}:(\tau_{i},\sigma_{i}]\times \mathbb{R}\to\mathbb{R}$  are continuous functions,  $^c_{\sigma_{i}}\mathcal{D}_{\tau}^\alpha $ is the Caputo fractional derivative of order $\alpha $ with lower terminal at $\sigma_{i}$ and $\mathcal{I}^{\beta}_{\tau_{i},\tau}$ is the Riemann-Liouville fractional integral of order $\beta$ with lower terminal $\tau_{i}$.

We comment that within this scope, the class of equations considered in the present paper  is more broad and  the outcomes acquired are the generalization of  the fundamental results obtained by Wang and Zang \cite {wang1}. We support our main results with the examples.

 In section 2, we introduce some preliminaries and auxiliary lemmas related to fractional calculus. In Section 3, we  prove  existence and uniqueness results for
\eqref{e1.1} by using Banach contraction principle via Bielecki norm. In Section 4, adopting the idea 
of Wang and Zang \cite {wang1}  we examine different Bielecki–Ulam’s type stability for the problem \eqref{e1.1}. Finally, an example has been provided to illustrate the results we obtained. 

\section{Preliminaries} \label{preliminaries}
Let $J=[0,T]$. Let $C(J,\mathbb{R})=\left\lbrace x:J\to \mathbb{R} \,
;\, x \, \text{is continuous function}\right\rbrace$ be the Banach space endowed with a Bielecki norm 
$$\|x\|_{B}=\sup_{\tau\in J}\frac{|x(\tau)|}{e^{\theta \tau}},$$ 
where  $\theta>0$ is a fixed real number. Let 
\begin{gather*}
PC(J,\mathbb{R})=\Big\{x:J\to \mathbb{R} \,:\,  x\in C((\tau_{i},\tau_{i+1}],\mathbb{R}),  ~\text{both} ~ x(\tau_{i}^{+}) 
 ~\text{and}~ x(\tau_{i}^{-})~\text{exists and} \\ 
 x(\tau_{i})=x(\tau_{i}^{-}),\; i=0,1,\cdots m\Big\}.
\end{gather*}
If $PC(J,\mathbb{R})$ is endowed with the norm
$$\|x\|_{PB}=\sup_{\tau\in J}\frac{|x(\tau)|}{e^{\theta \tau}}, ~ \theta>0$$
then  $\Big(PC(J,\mathbb{R}), ~\|\cdot\|_{PB}\Big)$ is a Banach space. The space
\begin{gather*}
PC^{1}(J,\mathbb{R})=\Big\{x \in PC(J,\mathbb{R}) \,:\,  x' \in PC(J,\mathbb{R}) \Big\}.
\end{gather*}
with the norm $\|x\|_{PB'}=\max\left\lbrace \|x\|_{PB},~\|x'\|_{PB}\right\rbrace $ then $\Big(PC^{1}(J,\mathbb{R}), ~\|\cdot\|_{PB'}\Big)$ is a Banach space.

   Next, we use definitions and the results listed bellow  from fractional calculus. For more details, we refer the readers to the monographs \cite{Podlubny} 
\begin{definition}
Let $g\in C[a,T]$ with $T>a\geq 0 $ and $\beta \geq {0}$, then the Riemann-Liouville fractional integral $\mathcal{I}^{\beta}_{a,\tau}$ of order $\beta$ of a function $g$ is defined as
\begin{equation*}
 \mathcal{I}^{\beta}_{a,\tau}\; g(\tau)=\frac{1}{\Gamma(\beta)}\int_{a} ^\tau (\tau-\sigma)^{\beta -1 }g(\sigma)d\sigma ,\;\; \tau>a\geq 0,
\end{equation*}
provided the integral exists. 
\end{definition}

\begin{definition}
Let $0<\alpha\leq 1 $ then the Caputo fractional derivative $ ^c_{a}\mathcal{D}_{t}^\alpha$ of order $\alpha$ with lower terminal $a$ of a function $g\in C^{1}[a,T] $  is defined as
\begin{equation*}
^c_{a}\mathcal{D}_{\tau}^\alpha g(\tau)=\frac{1}{\Gamma(1-\alpha)}\int_{a}^\tau(\tau-\sigma)^{-\alpha}g'(\sigma)d\sigma,\;\; \tau>a\geq 0.
\end{equation*}
\end{definition}

\begin{lemma}\label{lm1}
Let $m-1<\alpha\leq m \in \mathbb{N}$ and $ g\in C^{m}[a,T]$. Then
$$
\mathcal{I}^{\alpha}_{a,\tau}[^c_{a}\mathcal{D}^{\alpha}_{\tau}g(\tau)]=g(\tau)-\sum_{k=0}^{m-1}\frac {g^{(k)}(a)}{\Gamma(k+1)}\tau^{k},\;\; \tau>a\geq 0.
$$
\end{lemma}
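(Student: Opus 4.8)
The plan is to reduce the identity to two classical facts: the index (semigroup) law for Riemann--Liouville integrals and the fundamental theorem of calculus (equivalently, Taylor's formula with integral remainder). The first step is to rewrite the Caputo derivative as a Riemann--Liouville integral of the top-order classical derivative of $g$. In the range actually needed here, $0<\alpha<1$ (so $m=1$), this is immediate from the definition:
\begin{equation*}
{}^{c}_{a}\mathcal{D}^{\alpha}_{\tau}g(\tau)=\frac{1}{\Gamma(1-\alpha)}\int_{a}^{\tau}(\tau-\sigma)^{-\alpha}g'(\sigma)\,d\sigma=\mathcal{I}^{1-\alpha}_{a,\tau}g'(\tau),
\end{equation*}
and for general $m-1<\alpha\le m$ one has likewise ${}^{c}_{a}\mathcal{D}^{\alpha}_{\tau}g(\tau)=\mathcal{I}^{m-\alpha}_{a,\tau}g^{(m)}(\tau)$ (with $\mathcal{I}^{0}=\mathrm{Id}$ when $\alpha=m$). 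Granting the index law $\mathcal{I}^{\mu}_{a,\tau}\mathcal{I}^{\nu}_{a,\tau}\varphi=\mathcal{I}^{\mu+\nu}_{a,\tau}\varphi$ for $\mu,\nu>0$ and $\varphi\in C[a,T]$, I would then compute
\begin{equation*}
\mathcal{I}^{\alpha}_{a,\tau}\bigl[{}^{c}_{a}\mathcal{D}^{\alpha}_{\tau}g(\tau)\bigr]=\mathcal{I}^{\alpha}_{a,\tau}\mathcal{I}^{m-\alpha}_{a,\tau}g^{(m)}(\tau)=\mathcal{I}^{m}_{a,\tau}g^{(m)}(\tau).
\end{equation*}
Since $m$ is a positive integer, $\mathcal{I}^{m}_{a,\tau}$ is the $m$-fold iterated integral, and by repeated integration by parts (or directly by Taylor's theorem),
\begin{equation*}
\mathcal{I}^{m}_{a,\tau}g^{(m)}(\tau)=g(\tau)-\sum_{k=0}^{m-1}\frac{g^{(k)}(a)}{k!}(\tau-a)^{k},
\end{equation*}
which, using $k!=\Gamma(k+1)$, is the asserted formula; for $m=1$ the sum reduces to $g(a)$, so $\mathcal{I}^{\alpha}_{a,\tau}[{}^{c}_{a}\mathcal{D}^{\alpha}_{\tau}g(\tau)]=g(\tau)-g(a)$, which is all that is invoked in the sequel.

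The bulk of the work --- and the step I expect to be the main obstacle --- is establishing the index law, which I would prove by Fubini's theorem. Writing
\begin{equation*}
\mathcal{I}^{\mu}_{a,\tau}\mathcal{I}^{\nu}_{a,\tau}\varphi(\tau)=\frac{1}{\Gamma(\mu)\Gamma(\nu)}\int_{a}^{\tau}(\tau-s)^{\mu-1}\int_{a}^{s}(s-\sigma)^{\nu-1}\varphi(\sigma)\,d\sigma\,ds,
\end{equation*}
the integrand is absolutely integrable on the triangle $\{a\le\sigma\le s\le\tau\}$ because $\varphi$ is continuous (hence bounded there) and the power-type kernels are integrable, so the order of integration may be exchanged:
\begin{equation*}
\mathcal{I}^{\mu}_{a,\tau}\mathcal{I}^{\nu}_{a,\tau}\varphi(\tau)=\frac{1}{\Gamma(\mu)\Gamma(\nu)}\int_{a}^{\tau}\varphi(\sigma)\left(\int_{\sigma}^{\tau}(\tau-s)^{\mu-1}(s-\sigma)^{\nu-1}\,ds\right)d\sigma.
\end{equation*}
The inner integral is evaluated by the substitution $s=\sigma+(\tau-\sigma)u$, which turns it into $(\tau-\sigma)^{\mu+\nu-1}\int_{0}^{1}u^{\nu-1}(1-u)^{\mu-1}\,du=(\tau-\sigma)^{\mu+\nu-1}B(\nu,\mu)$ with $B(\nu,\mu)=\Gamma(\mu)\Gamma(\nu)/\Gamma(\mu+\nu)$; substituting back yields exactly $\mathcal{I}^{\mu+\nu}_{a,\tau}\varphi(\tau)$.

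For \eqref{e1.1} only $m=1$ is relevant, in which case the iterated-integral/Taylor step degenerates to the fundamental theorem of calculus, $\mathcal{I}^{1}_{a,\tau}g'(\tau)=g(\tau)-g(a)$, and the whole argument rests on the single Beta-function computation above with $\mu=\alpha$ and $\nu=1-\alpha$. Alternatively, the statement is entirely standard and may simply be quoted from the monograph \cite{Podlubny}; the outline above records how a self-contained proof proceeds.
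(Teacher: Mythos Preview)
Your argument is correct and entirely standard; in fact the paper does not supply a proof of this lemma at all --- it is stated in the preliminaries as a quotation from the monograph \cite{Podlubny}, exactly as you suggest in your final sentence. So there is nothing to compare: the paper's ``proof'' is a citation, and your self-contained derivation via the semigroup law $\mathcal{I}^{\mu}\mathcal{I}^{\nu}=\mathcal{I}^{\mu+\nu}$ followed by Taylor's formula with integral remainder is the usual route one finds in textbooks.

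One small point worth flagging: your computation yields the Taylor polynomial in powers of $(\tau-a)$, namely $\sum_{k=0}^{m-1}\dfrac{g^{(k)}(a)}{k!}(\tau-a)^{k}$, whereas the lemma as printed in the paper has $\tau^{k}$. The version you obtain is the correct one for a general lower terminal $a\ge 0$; the paper's formula is only literally right when $a=0$. This does not affect anything downstream, since the paper only invokes the case $m=1$ (where the sum collapses to the constant $g(a)$ and the discrepancy disappears), but you should not claim your formula ``is the asserted formula'' without noting this.
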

Following lemma plays an important role to obtain our results.
\begin{lemma}\label{Pru}\cite{Pud} 
Let $\alpha, \beta, \gamma $ and $p$ be constants such that $\alpha>0,\; p(\gamma-1)+1>0$ and $p(\beta-1)+1>0$. Then
$$\int_{0}^{\tau}(\tau^{\alpha}-\sigma^{\alpha})^{p(\beta-1)}\sigma^{p(\gamma-1)}d\sigma=\frac{\tau^{\theta}}{\alpha}\mathbb{B}\left( \frac{p(\gamma-1)+1}{\alpha},p(\beta-1)+1\right),\; where\; \tau\in \mathbb{R_{+}} $$
where $$\mathbb{B}(\xi,\sigma)=\int_{0}^{1} \sigma^{\xi-1}(1-\sigma)^{\sigma-1}d\sigma,\;\; Re(\xi)>0,\;Re(\sigma)>0$$ is a Beta function and $ \theta=p[\alpha(\beta-1)+\gamma-1]+1$
\end{lemma}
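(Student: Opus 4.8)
The plan is to reduce the integral to the standard Euler Beta integral by a single change of variables that rescales $\sigma$ onto the unit interval while absorbing the exponent $\alpha$. Concretely, I would set $u=(\sigma/\tau)^{\alpha}$, equivalently $\sigma=\tau\,u^{1/\alpha}$, so that $d\sigma=(\tau/\alpha)\,u^{1/\alpha-1}\,du$, with $\sigma=0$ corresponding to $u=0$ and $\sigma=\tau$ to $u=1$. Under this substitution $\tau^{\alpha}-\sigma^{\alpha}=\tau^{\alpha}(1-u)$, hence $(\tau^{\alpha}-\sigma^{\alpha})^{p(\beta-1)}=\tau^{\alpha p(\beta-1)}(1-u)^{p(\beta-1)}$, and $\sigma^{p(\gamma-1)}=\tau^{p(\gamma-1)}u^{p(\gamma-1)/\alpha}$.

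Substituting and collecting powers of $\tau$, the $\tau$-dependence of the integrand factors out as $\tau^{\alpha p(\beta-1)+p(\gamma-1)+1}$, and a direct check shows this exponent equals $\theta=p[\alpha(\beta-1)+\gamma-1]+1$. What remains under the integral sign is
$$\int_{0}^{1}u^{\frac{p(\gamma-1)+1}{\alpha}-1}(1-u)^{p(\beta-1)}\,du,$$
which, reading off the exponents $\frac{p(\gamma-1)+1}{\alpha}-1$ of $u$ and $p(\beta-1)=\big(p(\beta-1)+1\big)-1$ of $1-u$, is exactly $\mathbb{B}\!\left(\frac{p(\gamma-1)+1}{\alpha},\,p(\beta-1)+1\right)$ by the definition of the Beta function recalled in the statement. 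The hypotheses $\alpha>0$, $p(\gamma-1)+1>0$ and $p(\beta-1)+1>0$ are precisely what guarantee that both Beta parameters are positive, so the (possibly improper) integral converges and the identity is meaningful.

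There is no real obstacle here: the only point requiring care is the exponent bookkeeping — verifying that the accumulated power of $\tau$ is $\theta$ and that $p(\gamma-1)/\alpha+1/\alpha-1$ is indeed the first Beta parameter minus one. I would also note that since $\alpha>0$ the map $u=(\sigma/\tau)^{\alpha}$ is a $C^{1}$ diffeomorphism of $(0,1)$ onto itself, so the change-of-variables formula applies on $(0,\tau)$ and the endpoint behaviour is handled by a routine limiting argument justified by the two positivity conditions.
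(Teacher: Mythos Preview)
Your argument is correct: the substitution $u=(\sigma/\tau)^{\alpha}$ reduces the integral to the Euler Beta integral, and your exponent bookkeeping checks out. Note, however, that the paper does not supply its own proof of this lemma; it is quoted from the reference \cite{Pud} (Prudnikov--Brychkov--Marichev), so there is no in-paper argument to compare against. Your derivation is the standard one and would serve perfectly well as a self-contained proof.
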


\section{Existence and Uniqueness Results}

\begin{lemma}\label{lm3.1}
A function  $x\in PC^{1}(J,R)$ is a classical solution of the problem
\begin{equation}\label{e3.1}
\begin{cases} 
^c_{\sigma_{i}}\mathcal{D}_{\tau}^\alpha x(\tau) =  f\left(\tau,x(\tau),\int_{\sigma_{i}}^{\tau}h(\sigma,x(\sigma))d\sigma\right),\, \tau\in(\sigma_{i},\tau_{i+1}],\; i=0,1,\cdots,m,~\alpha\in(0,1)\\
x(\tau)  =  \mathcal{I}^{\beta}_{\tau_{i},\tau}h_{i}(\tau,x(\tau)), ~ \tau\in(\tau_{i},\sigma_{i}],\; i=1,2,\cdots ,m, ~\beta\in(0,1)\\
 x(0)=x_{0}
\end{cases}
\end{equation}
if $x$ satisfies the  fractional Volterra integral equation
\begin{equation*}
x(\tau) =
\begin{cases}
x_{0},\;\tau=0,\\
\mathcal{I}^{\beta}_{\tau_{i},\tau}h_{i}(\tau,x(\tau)),\;\tau\in(\tau_{i},\sigma_{i}],\; i=1,\cdots ,m,   \\
x_{0}+\mathcal{I}^{\alpha}_{0,\tau}f\left(\tau,x(\tau),\in\tau_{0}^{\tau}h(\sigma,x(\sigma))d\sigma\right),\;  \tau\in (0,\tau_{1}],\\
\mathcal{I}^{\beta}_{\tau_{i},\sigma_{i}}h_{i}(\sigma_{i},x(\sigma_{i}))+\mathcal{I}^{\alpha}_{\sigma_{i},\tau}f\left(\tau,x(\tau),\int_{\sigma_{i}}^{\tau}h(\sigma,x(\sigma))d\sigma\right),\tau\in (\sigma_{i},\tau_{i+1}], i=1,\cdots m.
\end{cases}
\end{equation*}
\end{lemma}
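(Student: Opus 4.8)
The plan is to establish the equivalence between the impulsive Caputo problem \eqref{e3.1} and the claimed piecewise integral equation by treating each of the subintervals $(0,\tau_1]$, $(\tau_i,\sigma_i]$, and $(\sigma_i,\tau_{i+1}]$ separately, using Lemma \ref{lm1} to pass from the differential to the integral form and the terminal/impulse conditions to pin down the integration constants. For the necessity part, I would start on $(0,\tau_1]$, where the lower terminal is $\sigma_0=0$: applying the Riemann--Liouville operator $\mathcal{I}^{\alpha}_{0,\tau}$ to both sides of the first line of \eqref{e3.1} and invoking Lemma \ref{lm1} with $m=1$ (legitimate because the restriction of $x$ to $[0,\tau_1]$ is $C^1$, and the composed map $\tau\mapsto f(\tau,x(\tau),\int_{0}^{\tau}h(\sigma,x(\sigma))\,d\sigma)$ is continuous there, so the fractional integral exists) yields $x(\tau)=x(0)+\mathcal{I}^{\alpha}_{0,\tau}f(\tau,x(\tau),\int_{0}^{\tau}h(\sigma,x(\sigma))\,d\sigma)$; the condition $x(0)=x_0$ then gives the third branch. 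On each impulse window $(\tau_i,\sigma_i]$ the second line of \eqref{e3.1} is verbatim the second branch. Finally, on $(\sigma_i,\tau_{i+1}]$ for $i=1,\dots,m$ the lower terminal is $\sigma_i$, so the same application of $\mathcal{I}^{\alpha}_{\sigma_i,\tau}$ and Lemma \ref{lm1} gives $x(\tau)=x(\sigma_i)+\mathcal{I}^{\alpha}_{\sigma_i,\tau}f(\cdots)$, and since $\sigma_i\in(\tau_i,\sigma_i]$ the impulse condition forces $x(\sigma_i)=\mathcal{I}^{\beta}_{\tau_i,\sigma_i}h_i(\sigma_i,x(\sigma_i))$, producing the fourth branch; the value at $\tau=0$ is just $x_0$.

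For the converse, assume $x\in PC^{1}(J,\mathbb{R})$ satisfies the integral equation. On $(\tau_i,\sigma_i]$ the identity $x(\tau)=\mathcal{I}^{\beta}_{\tau_i,\tau}h_i(\tau,x(\tau))$ is precisely the impulse condition of \eqref{e3.1}. On $(0,\tau_1]$ and on $(\sigma_i,\tau_{i+1}]$ I would apply the Caputo derivative ${}^c_{\sigma_i}\mathcal{D}^{\alpha}_{\tau}$ to both sides: the constant terms ($x_0$ in the first case, $\mathcal{I}^{\beta}_{\tau_i,\sigma_i}h_i(\sigma_i,x(\sigma_i))$ in the second) are annihilated, and the companion identity ${}^c_{\sigma_i}\mathcal{D}^{\alpha}_{\tau}\mathcal{I}^{\alpha}_{\sigma_i,\tau}g=g$ for continuous $g$ recovers exactly the differential equation of \eqref{e3.1}. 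Passing to the limit $\tau\to 0^{+}$ in the third branch gives $x(0)=x_0$, so \eqref{e3.1} holds.

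The step that needs the most care is the interface bookkeeping across the prefixed points $\tau_i$ and $\sigma_i$: one must check that the branches are mutually consistent, that the value $x(\sigma_i)=x(\sigma_i^{-})$ delivered by the impulse branch is exactly the quantity fed as the constant into the subsequent branch on $(\sigma_i,\tau_{i+1}]$, and that the one-sided limits $x(\tau_i^{\pm})$, $x(\sigma_i^{\pm})$ together with the corresponding derivatives exist, so that the membership $x\in PC^{1}(J,\mathbb{R})$ is genuinely respected and the Caputo derivative on each piece (with its own lower terminal) is well defined. Equally, the regularity hypotheses making Lemma \ref{lm1} applicable — namely $x\in C^{1}$ on each closed subinterval (so $g\in C^{m}$ with $m=1$) and continuity of the nonlinear composition $\tau\mapsto f(\tau,x(\tau),\int_{\sigma_i}^{\tau}h(\sigma,x(\sigma))\,d\sigma)$ guaranteeing existence of the fractional integrals — must be tracked explicitly at every stage; beyond that, the argument is a routine unwinding of the fractional fundamental theorem of calculus on each subinterval.
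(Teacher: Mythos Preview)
Your proposal is correct and follows essentially the same route as the paper: apply $\mathcal{I}^{\alpha}_{\sigma_i,\tau}$ to the differential equation on each $(\sigma_i,\tau_{i+1}]$, invoke Lemma~\ref{lm1} to obtain $x(\tau)=x(\sigma_i)+\mathcal{I}^{\alpha}_{\sigma_i,\tau}f(\cdots)$, and identify $x(\sigma_i)$ via the impulse branch. You are in fact more thorough than the paper, which only writes out the passage from \eqref{e3.1} to the integral equation and omits both the converse direction and the interface/regularity checks you flag; those additions are welcome but do not change the underlying argument.
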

\begin{proof}
For $i=0$,  on operating Riemann-Liouville fractional integral operator  $\mathcal{I}^{\alpha}_{\sigma_{i},\tau}$ on both sides of fractional differential equation \eqref{e3.1}, we get
$$
\mathcal{I}^{\alpha}_{0,\tau}[^c_{0}\mathcal{D}_{\tau}^\alpha x(\tau)] =\mathcal{I}^{\alpha}_{0,\tau}f\left(\tau,x(\tau),\int_{0}^{\tau}h(\sigma,x(\sigma))d\sigma\right),\; \tau\in (0,\tau_{1}]
$$
As $0<\alpha<1$, in view of lemma \ref{lm1}, we obtain
$$x(\tau)-x(0)=\mathcal{I}^{\alpha}_{0,\tau}f\left(\tau,x(\tau),\int_{0}^{\tau}h(\sigma,x(\sigma))d\sigma\right),\; \tau\in (0,\tau_{1}]$$
Therefore
\begin{equation*}
x(\tau)=x_{0}+\mathcal{I}^{\alpha}_{0,\tau}f\left(\tau,x(\tau),\int_{0}^{\tau}h(\sigma,x(\sigma))d\sigma\right),\;\; \tau\in (0,\tau_{1}]
\end{equation*}
Similarly, for each $ i ~(i=1,2,\cdots m)$, operating    $\mathcal{I}^{\alpha}_{\sigma_{i},\tau}$ on both sides of \eqref{e3.1}, we get 
\begin{equation*}
x(\tau)=x(\sigma_{i})+\mathcal{I}^{\alpha}_{\sigma_{i},\tau}f\left(\tau,x(\tau),\int_{\sigma_{i}}^{\tau}h(\sigma,x(\sigma))d\sigma\right),\;\; \tau\in (\sigma_{i},\tau_{i+1}]
\end{equation*}
But from equation \eqref{e3.1}, we have
$$x(\sigma_{i})=\left[ \mathcal{I}^{\beta}_{\tau_{i},\tau}h_{i}(\tau,x(\tau))\right]_{\tau=\sigma_{i}} =\mathcal{I}^{\beta}_{\tau_{i},\sigma_{i}}h_{i}(\sigma_{i},x(\sigma_{i})),~ \; i=1,2,\cdots ,m$$
Therefore
\begin{equation*}
x(\tau)=\mathcal{I}^{\beta}_{\tau_{i},\sigma_{i}}h_{i}(\sigma_{i},x(\sigma_{i}))+\mathcal{I}^{\alpha}_{\sigma_{i},\tau}f\left(\tau,x(\tau),\int_{\sigma_{i}}^{\tau}h(\sigma,x(\sigma))d\sigma\right),\;\; \tau\in (\sigma_{i},\tau_{i+1}].
\end{equation*}
\end{proof}
We list the following hypotheses in order to establish our main results.
\begin{itemize}
\item[(H1)] The function $f\in C(J\times\mathbb{R}\times\mathbb{R},\mathbb{R})$ satisfies the Lipschitz  condition 
$$ |f(\tau,x,y)-f(\tau,\bar{x},\bar{y})|\leq M_{f}|x-\bar{x}|+N_{f}|y-\bar{y}|,~\tau\in J ;\, \,  x,\bar{x},y,\bar{y}\in\mathbb R,$$
where $M_{f}>0,\;N_{f}>0.$

\item[(H2)] The function $h\in C(J\times\mathbb{R},\mathbb{R})$ satisfies the Lipschitz  condition 
$$ |h(\tau,x)-h(\tau,\bar{x})|\leq K_{h}|x-\bar{x}|,~ \tau\in J ;\, \,  x,\bar{x}\in\mathbb R,$$
where $K_{h}>0.$
\item[(H3)]For each $i=1,2,\cdots m $; $h_{i}\in C((\sigma_{i},\tau_{i}]\times\mathbb{R},\mathbb{R})$ and satisfies Lipschitz condition
$$ |h_{i}(\tau,x)-h_{i}(\tau,\bar{x})|\leq L_{h_{i}}|x-\bar{x}|, \text{for each } \tau\in (\sigma_{i},\tau_{i}] ;\, \,  x,\bar{x}\in\mathbb R,$$
where $L_{h_{i}}>0.$
\end{itemize}

\begin{theorem}\label{Thm1}
Assume that hypotheses $ (H1)$-$(H3)$ holds. Then the problem \eqref{e3.1} has a unique solution, provided that $\alpha,\beta\in\left(\frac{1}{2},1\right).$
\end{theorem}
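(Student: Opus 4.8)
The plan is to recast the boundary‑value problem \eqref{e3.1} as a fixed‑point equation $x=\mathcal{T}x$ on the Banach space $\bigl(PC(J,\mathbb{R}),\|\cdot\|_{PB}\bigr)$, where $\mathcal{T}$ is the operator whose value at $\tau$ is given piecewise by the fractional Volterra integral equation in Lemma \ref{lm3.1}: namely $x_0+\mathcal{I}^{\alpha}_{0,\tau}f(\cdot)$ on $(0,\tau_1]$, $\mathcal{I}^{\beta}_{\tau_i,\tau}h_i(\cdot)$ on $(\tau_i,\sigma_i]$, and $\mathcal{I}^{\beta}_{\tau_i,\sigma_i}h_i(\sigma_i,x(\sigma_i))+\mathcal{I}^{\alpha}_{\sigma_i,\tau}f(\cdot)$ on $(\sigma_i,\tau_{i+1}]$. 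By Lemma \ref{lm3.1} a fixed point of $\mathcal{T}$ is exactly a solution of \eqref{e3.1}, so it suffices to show $\mathcal{T}$ is a contraction in the Bielecki norm; the Banach contraction principle then gives existence and uniqueness.

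The core estimates proceed interval‑by‑interval. First I would take $x,y\in PC(J,\mathbb{R})$ and estimate $|(\mathcal{T}x)(\tau)-(\mathcal{T}y)(\tau)|$ on each of the three types of subinterval. On $(0,\tau_1]$ and $(\sigma_i,\tau_{i+1}]$ the $f$‑term contributes $\mathcal{I}^{\alpha}_{\sigma_i,\tau}\bigl(M_f|x(\sigma)-y(\sigma)|+N_f\int_{\sigma_i}^{\sigma}K_h|x(r)-y(r)|\,dr\bigr)$ via (H1)–(H2); here I bound $|x(\sigma)-y(\sigma)|\le e^{\theta\sigma}\|x-y\|_{PB}$, pull the inner $h$‑integral through similarly, and then use the key computational device — Lemma \ref{Pru} (with $\alpha=1$, i.e.\ the elementary Beta integral $\int_0^\tau(\tau-\sigma)^{\alpha-1}e^{\theta\sigma}d\sigma\le e^{\theta\tau}\,\Gamma(\alpha)/\theta^{\alpha}$, or more precisely the fractional‑power version as stated) to convert $\mathcal{I}^{\alpha}_{\sigma_i,\tau}e^{\theta\sigma}$ into a constant multiple of $e^{\theta\tau}$. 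Dividing by $e^{\theta\tau}$ yields a bound of the form $\bigl(\tfrac{M_f}{\theta^{\alpha}}+\tfrac{N_f K_h}{\theta^{\alpha+1}}\bigr)\Gamma(\alpha)^{-1}\cdot c_\alpha\,\|x-y\|_{PB}$ (constants depending on $\alpha$, $T$). For the impulsive intervals $(\tau_i,\sigma_i]$, (H3) and the same Beta‑integral trick with exponent $\beta$ give a bound $L_{h_i}\,\Gamma(\beta)^{-1}c_\beta\,\theta^{-\beta}\|x-y\|_{PB}$, and the jump term $\mathcal{I}^{\beta}_{\tau_i,\sigma_i}h_i(\sigma_i,x(\sigma_i))$ appearing in the third branch is handled by the already‑obtained bound at $\tau=\sigma_i$. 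Taking the supremum over $\tau\in J$ and over the finitely many pieces, one arrives at $\|\mathcal{T}x-\mathcal{T}y\|_{PB}\le \Lambda(\theta)\|x-y\|_{PB}$ where $\Lambda(\theta)\to 0$ as $\theta\to\infty$; hence for $\theta$ large enough $\Lambda(\theta)<1$.

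The main obstacle — and the reason the hypothesis $\alpha,\beta\in(\tfrac12,1)$ appears — is making the Bielecki/Beta‑type estimate of $\mathcal{I}^{\alpha}_{\sigma_i,\tau}e^{\theta\sigma}$ rigorous while keeping the constants independent of $\theta$ in the right way. Applying Lemma \ref{Pru} literally requires the parameter restrictions $p(\gamma-1)+1>0$ and $p(\beta-1)+1>0$; with the bookkeeping used there (taking $p=2$ to accommodate a Cauchy–Schwarz split of the product $e^{\theta\sigma}=e^{\theta\sigma/2}\cdot e^{\theta\sigma/2}$, or to handle the composed kernels $(\tau-\sigma)^{\alpha-1}$ and a second fractional weight), the conditions force $\alpha>\tfrac12$ and $\beta>\tfrac12$ so that $2(\alpha-1)+1>0$ and $2(\beta-1)+1>0$. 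So the delicate step is to organize the double integral (the $f$‑term already contains one integral from the Volterra operator and one from the inner $h$), apply Lemma \ref{Pru} to each fractional weight, and verify that the resulting Beta constants are finite precisely under $\alpha,\beta>\tfrac12$; everything else is the routine contraction‑constant computation and an appeal to the Banach fixed point theorem. I would also note in passing that continuity of $\mathcal{T}x$ on each $(\tau_i,\tau_{i+1}]$ with the correct one‑sided limits (so that $\mathcal{T}$ indeed maps into $PC(J,\mathbb{R})$) follows from continuity of $f,h,h_i$ together with standard properties of Riemann–Liouville integrals.
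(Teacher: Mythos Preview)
Your overall strategy --- recasting \eqref{e3.1} via Lemma~\ref{lm3.1} as a fixed-point equation and proving a Bielecki-norm contraction piecewise over $(0,\tau_1]$, $(\tau_i,\sigma_i]$, $(\sigma_i,\tau_{i+1}]$ --- is exactly what the paper does, and you correctly locate the restriction $\alpha,\beta>\tfrac12$ in a Cauchy--Schwarz (H\"older with $p=2$) split. The confusion is in the tool you invoke. Lemma~\ref{Pru} is \emph{not} used in the proof of Theorem~\ref{Thm1}; it is a Beta-function identity (no exponential factor) and appears only in Theorem~\ref{Thm2}, where the modified hypotheses $(\tilde H1),(\tilde H3)$ insert a power $\tau^{\gamma}$ that genuinely produces a Beta integral after H\"older with a general exponent. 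In the present proof the paper simply applies Cauchy--Schwarz directly:
\[
\int_{\tau_i}^{\tau}(\tau-\sigma)^{\beta-1}e^{\theta\sigma}\,d\sigma
\le\Bigl(\int_{\tau_i}^{\tau}(\tau-\sigma)^{2(\beta-1)}\,d\sigma\Bigr)^{1/2}
\Bigl(\int_{\tau_i}^{\tau}e^{2\theta\sigma}\,d\sigma\Bigr)^{1/2},
\]
and the first factor is finite precisely when $2(\beta-1)+1>0$, i.e.\ $\beta>\tfrac12$; the resulting $\theta$-decay is $(2\theta)^{-1/2}$, not $\theta^{-\beta}$. The same device handles the $\alpha$-kernel, and the inner $h$-integral is absorbed via $\mathcal I^\alpha\circ\mathcal I^1=\mathcal I^{\alpha+1}$ before the split.

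Note also an internal inconsistency in your sketch: the estimate you first mention, $\int_0^\tau(\tau-\sigma)^{\alpha-1}e^{\theta\sigma}\,d\sigma\le e^{\theta\tau}\Gamma(\alpha)/\theta^{\alpha}$, is actually valid for \emph{all} $\alpha\in(0,1)$ (substitute $u=\tau-\sigma$ and bound by the full gamma integral). If you used it, the contraction constant would be $O(\theta^{-\alpha})$ with no lower bound on $\alpha$ needed, so it cannot be the source of the restriction you are trying to explain. To match the paper's argument (and its stated hypothesis), drop the reference to Lemma~\ref{Pru} here and carry out the elementary Cauchy--Schwarz computation above; that is where $\alpha,\beta>\tfrac12$ genuinely enters.
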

\begin{proof}
Define an operator $T:PC(J,\mathbb{R})\to PC(J,\mathbb{R})$ by 
\begin{equation*}
(Tx)(\tau) =
\begin{cases}
x_{0},\text{if  $\tau=0,$}\\
\mathcal{I}^{\beta}_{\tau_{i},\tau}h_{i}(\tau,x(\tau)),\text{ if $ \tau\in(\tau_{i},\sigma_{i}],\; i=1,\cdots ,m ,$ } \\
x_{0}+\mathcal{I}^{\alpha}_{0,\tau}f\left(\tau,x(\tau),\int_{0}^{\tau}h(\sigma,x(\sigma))d\sigma\right),
\text{ if $\tau\in (0,\tau_{1}],$}\\
\mathcal{I}^{\beta}_{\tau_{i},\sigma_{i}}h_{i}(\sigma_{i},x(\sigma_{i}))+\mathcal{I}^{\alpha}_{\sigma_{i},\tau}f\left(\tau,x(\tau),\int_{\sigma_{i}}^{\tau}h(\sigma,x(\sigma))d\sigma\right),\text{ if  $\tau\in (\sigma_{i},\tau_{i+1}]$}, ~i=1,\cdots m.\\
\end{cases}
\end{equation*}
We shall show that the operator $T$ is  contraction with respect to Bielecki norm. Let ${x,y \in PC(J,\mathbb{R})}$ and $\tau\in (\tau_{i},\sigma_{i}],\;i=1,2,\cdots m$, then using hypothesis $(H3)$, we have
\begin{align*}
|& \mathcal{I}^{\beta}_{\tau_{i},\tau}h_{i}(\tau,x(\tau))-\mathcal{I}^{\beta}_{\tau_{i},\tau}h_{i}(\tau,y(\tau))|\\
&=\left| \dfrac{1}{\Gamma(\beta)}\int_{\tau_{i}}^{\tau}(\tau-\sigma)^{\beta-1}h_{i}(\sigma,x(\sigma))d\sigma-\dfrac{1}{\Gamma(\beta)}\int_{\tau_{i}}^{\tau}(\tau-\sigma)^{\beta-1}h_{i}(\sigma,y(\sigma))d\sigma\right| \\
&\leq \dfrac{1}{\Gamma(\beta)}\int_{\tau_{i}}^{\tau}(\tau-\sigma)^{\beta-1}\left|h_{i}(\sigma,x(\sigma))-h_{i}(\sigma,y(\sigma))\right|d\sigma\\
&\leq \dfrac{L_{h_{i}}}{\Gamma(\beta)}\int_{\tau_{i}}^{\tau}(\tau-\sigma)^{\beta-1}\left|x(\sigma)-y(\sigma)\right|d\sigma\\
&\leq\dfrac{L_{h_{i}}}{\Gamma(\beta)}\int_{\tau_{i}}^{\tau}(\tau-\sigma)^{\beta-1}e^{\theta \sigma}\left( \sup_{\sigma\in (\tau_{i},\sigma_{i}]}e^{-\theta \sigma}|x(\sigma)-y(\sigma)|\right)d\sigma
\\
&\leq \dfrac{L_{h_{i}}}{\Gamma(\beta)}\|x-y\|_{PB}\int_{\tau_{i}}^{\tau}(\tau-\sigma)^{\beta-1}e^{\theta \sigma}d\sigma
\end{align*}
By Holders inequality,  for $\beta\in (\frac{1}{2},1)$ and $\tau\in (\tau_{i},\sigma_{i}]$, we have
\begin{align}\label{Hol}
\int_{\tau_{i}}^{\tau}(\tau-\sigma)^{\beta-1}e^{\theta \sigma}d\sigma
&\leq\left( \int_{\tau_{i}}^{\tau}(\tau-\sigma)^{2(\beta-1)}d\sigma\right) ^{\frac{1}{2}} \left( \int_{\tau_{i}}^{\tau} e^{2\theta \sigma}d\sigma\right) ^{\frac{1}{2}} \nonumber\\
& = \left(\dfrac{(\tau-\tau_{i})^{2\beta-1}}{2\beta-1}\right)^{\frac{1}{2}}\left(\dfrac{e^{2\theta \tau}-e^{2\theta \tau_{i}}}{2\theta} \right)^\frac{1}{2} \nonumber\\
&\leq  \dfrac{(\sigma_{i}-\tau_{i})^{\beta-\frac{1}{2}}}{\left( 2\beta-1\right)^\frac{1}{2}} \left( \dfrac{e^{\theta \tau}-e^{\theta \tau_{i}}}{(2\theta)^\frac{1}{2}}\right) \nonumber\\
&\leq  \dfrac{(\sigma_{i}-\tau_{i})^{\beta-\frac{1}{2}}}{\left( 2\beta-1\right)^\frac{1}{2}(2\theta)^\frac{1}{2}}e^{\theta \tau}
\end{align}
Thus for any $ x,y \in PC(J,R)$  and $\tau\in (\tau_{i},\sigma_{i}],\; i=1,2,\cdots m, $  we have
\begin{equation}\label{ie3.3}
\left| \mathcal{I}^{\beta}_{\tau_{i},\tau}h_{i}(\tau,x(\tau))-\mathcal{I}^{\beta}_{\tau_{i},\tau}h_{i}(\tau,y(\tau))\right|\leq \dfrac{L_{h_{i}}}{\Gamma(\beta)} \dfrac{(\sigma_{i}-\tau_{i})^{\beta-\frac{1}{2}}}{\left( 2\beta-1\right)^\frac{1}{2}(2\theta)^\frac{1}{2}}\,e^{\theta \tau}\,\|x-y\|_{PB}.
\end{equation}
Now,  for any $x,y \in PC(J,R)$ and $\tau\in (\sigma_{i},\tau_{i+1}],i=1,2,\cdots m,$ by using hypotheses $(H1)-(H2)$, we obtain
 \begin{align} \label{in3.5}
&\left|\mathcal{I}^{\alpha}_{\sigma_{i},\tau}f\left(\tau,x(\tau),\int_{\sigma_{i}}^{\tau}h(\sigma,x(\sigma))d\sigma\right)-\mathcal{I}^{\alpha}_{\sigma_{i},\tau}f\left(\tau,y(\tau),\int_{\sigma_{i}}^{\tau}h(\sigma,y(\sigma))d\sigma\right) \right| \nonumber\\
& \leq \mathcal{I}^{\alpha}_{\sigma_{i},\tau}\left|f\left(\tau,x(\tau),\mathcal{I}^{1}_{\sigma_{i},\tau}h(\tau,x(\tau)\right)-f\left(\tau,y(\tau),\mathcal{I}^{1}_{\sigma_{i},\tau}h(\tau,y(\tau)\right) \right| \nonumber\\
&\leq \mathcal{I}^{\alpha}_{\sigma_{i},\tau}\left(M_{f}|x(\tau)-y(\tau)|+N_{f}|\mathcal{I}^{1}_{\sigma_{i},\tau}h(\tau,y(\tau))-\mathcal{I}^{1}_{\sigma_{i},\tau}h(\tau,y(\tau))|\right)\nonumber\\
&\leq \mathcal{I}^{\alpha}_{\sigma_{i},\tau}\left( M_{f}|x(\tau)-y(\tau)|+N_{f}\mathcal{I}^{1}_{\sigma_{i},\tau} |h(\tau,x(\tau))-h(\tau,y(\tau))|\right) \nonumber\\
&\leq \mathcal{I}^{\alpha}_{\sigma_{i},\tau}\left( M_{f}|x(\tau)-y(\tau)|+N_{f}\mathcal{I}^{1}_{\sigma_{i},\tau}K_{h} |x(\tau)-y(\tau))|\right) \nonumber\\
&= M_{f}\mathcal{I}^{\alpha}_{\sigma_{i},\tau}|x(\tau)-y(\tau)|+N_{f}K_{h}\mathcal{I}^{\alpha}_{\sigma_{i},\tau}\left( \mathcal{I}^{1}_{\sigma_{i},\tau} |x(\tau)-y(\tau)|\right) \nonumber \\
&= M_{f}\mathcal{I}^{\alpha}_{\sigma_{i},\tau}|x(\tau)-y(\tau)|+N_{f}K_{h}\mathcal{I}^{\alpha+1}_{\sigma_{i},\tau} |x(\tau)-y(\tau)| \nonumber\\
&=\frac{M_{f}}{\Gamma(\alpha)}\int_{\sigma_{i}}^{\tau}(\tau-\sigma)^{\alpha-1}|x(\sigma)-y(\sigma)|d\sigma+\frac{N_{f}K_{h}}{{\Gamma(\alpha+1)}} \int_{\sigma_{i}}^{\tau}(\tau-\sigma)^{\alpha}|x(\sigma)-y(\sigma)|d\sigma \nonumber\\
&= \frac{M_{f}}{\Gamma(\alpha)}\int_{\sigma_{i}}^{\tau}(\tau-\sigma)^{\alpha-1}e^{\theta \sigma}\left( \sup_{\sigma\in (\sigma_{i},\tau_{i+1}]}e^{-\theta \sigma}|x(\sigma)-y(\sigma)|\right)d\sigma \nonumber\\
&\qquad+\frac{N_{f}K_{h}}{{\Gamma(\alpha+1)}} \int_{\sigma_{i}}^{\tau}(\tau-\sigma)^{\alpha}|x(\sigma)-y(\sigma)|e^{\theta \sigma}\left( \sup_{\sigma\in (\sigma_{i},\tau_{i+1}]}e^{-\theta \sigma}|x(\sigma)-y(\sigma)|\right)d\sigma \nonumber\\
&\leq\frac{M_{f}}{\Gamma(\alpha)}\|x-y\|_{PB}\int_{\sigma_{i}}^{\tau}(\tau-\sigma)^{\alpha-1}e^{\theta \sigma}d\sigma+\frac{N_{f}K_{h}}{{\Gamma(\alpha+1)}}\|x-y\|_{PB} \int_{\sigma_{i}}^{\tau}(\tau-\sigma)^{\alpha}e^{\theta \sigma}d\sigma.
\end{align}
Since $\alpha\in (\frac{1}{2},1)$, from the inequality \eqref{Hol}, for any $\tau\in(\sigma_{i},\tau_{i+1}],\; i= 1,2,\cdots ,m$, we have
\begin{equation}\label{in3.6}
\int_{\sigma_{i}}^{\tau}(\tau-\sigma)^{\alpha-1}e^{\theta \sigma}d\sigma\leq \frac{(\tau-\sigma_{i})^{\alpha-\frac{1}{2}}}{\left( 2\alpha-1\right)^\frac{1}{2}(2\theta)^\frac{1}{2}}e^{\theta \tau}\leq \frac{(\tau_{i+1}-\sigma_{i})^{\alpha-\frac{1}{2}}}{\left( 2\alpha-1\right)^\frac{1}{2}(2\theta)^\frac{1}{2}}e^{\theta \tau}
\end{equation}
By replacing $\alpha$ by $(\alpha+1)$ in the above inequality we get
\begin{equation}\label{in3.7}
\int_{\sigma_{i}}^{t}(t-\sigma)^{\alpha}e^{\theta \sigma}d\sigma\leq \frac{(t_{i+1}-\sigma_{i})^{\alpha+\frac{1}{2}}}{\left( 2\alpha+1\right)^\frac{1}{2}(2\theta)^\frac{1}{2}}e^{\theta t},\;\;t\in(\sigma_{i},t_{i+1}],\; i= 1,2,\cdots ,m
\end{equation}
Using the inequalities \eqref{in3.6} and \eqref{in3.7}, the inequality \eqref{in3.5} takes the form
\begin{align}\label{in3.8}
&\left|  \mathcal{I}^{\alpha}_{\sigma_{i},\tau}f\left(\tau,x(\tau),\int_{\sigma_{i}}^{\tau}h(\sigma,x(\sigma)\right)d\sigma)-\mathcal{I}^{\alpha}_{\sigma_{i},\tau}f\left(\tau,y(\tau),\int_{\sigma_{i}}^{\tau}h(\sigma,y(\sigma))d\sigma\right) \right|\nonumber\\
& \leq\left[ \frac{M_{f}}{\Gamma(\alpha)}\frac{(\tau_{i+1}-\sigma_{i})^{\alpha-\frac{1}{2}}}{\left( 2\alpha-1\right)^\frac{1}{2}(2\theta)^\frac{1}{2}}+\frac{N_{f}K_{h}}{{\Gamma(\alpha+1)}} \frac{(\tau_{i+1}-\sigma_{i})^{\alpha+\frac{1}{2}}}{\left( 2\alpha+1\right)^\frac{1}{2}(2\theta)^\frac{1}{2}}\right]\,e^{\theta \tau}\,\|x-y\|_{PB} 
\end{align}
From the definition of an operator $T$ and hypothesis (H1)-(H3), for any $x,y\in PC(J,\mathbb{R})$, we obtain that\\
\noindent  Case~1: For $\tau\in (\tau_{i},\sigma_{i}],\; i=1,2,\cdots m,$ from the inequality \eqref{ie3.3}, we have
\begin{align*}
| T  x(\tau)-Ty(\tau)|e^{-\theta \tau}&\leq\left| \mathcal{I}^{\beta}_{\tau_{i},\tau}h_{i}(\tau,x(\tau))-\mathcal{I}^{\beta}_{\tau_{i},\tau}h_{i}(\tau,y(\tau))\right|e^{-\theta \tau}\\
& \leq \dfrac{L_{h_{i}}}{\Gamma(\beta)}\dfrac{(\sigma_{i}-\tau_{i})^{\beta-\frac{1}{2}}}{\left( 2\beta-1\right)^\frac{1}{2}(2\theta)^\frac{1}{2}}\|x-y\|_{PB}.
\end{align*}
\noindent  Case~2: For $\tau\in (0,\tau_{1}]$, on similar line of \eqref{in3.8}, we have
\begin{align*}
&| T  x(\tau)-Ty(\tau)|e^{-\theta \tau}\\
& =\left|  \mathcal{I}^{\alpha}_{0,\tau}f\left(\tau,x(\tau),\int_{0}^{\tau}h(\sigma,x(\sigma))d\sigma\right)-\mathcal{I}^{\alpha}_{0,\tau}f\left(\tau,y(\tau),\int_{0}^{\tau}h(\sigma,y(\sigma))d\sigma\right) \right|e^{-\theta \tau}\\
&\leq\left[ \frac{M_{f}}{\Gamma(\alpha)}\frac{\tau_{1}^{\alpha-\frac{1}{2}}}{\left( 2\alpha-1\right)^\frac{1}{2}(2\theta)^\frac{1}{2}}+\frac{N_{f}K_{h}}{{\Gamma(\alpha+1)}} \frac{\tau_{1}^{\alpha+\frac{1}{2}}}{\left( 2\alpha+1\right)^\frac{1}{2}(2\theta)^\frac{1}{2}}\right]\|x-y\|_{PB} 
\end{align*}
\noindent  Case~3: For $\tau\in (\sigma_{i},\tau_{i+1}],\; i=1,2,\cdots m$, using the inequalities \eqref{ie3.3} and \eqref{in3.8}, we have 
\begin{align*}
&| T  x(\tau)-Ty(\tau)|e^{-\theta \tau}\\
&\;= \left| \mathcal{I}^{\beta}_{\tau_{i},\sigma_{i}}h_{i}(\sigma_{i},x(\sigma_{i}))-\mathcal{I}^{\beta}_{\tau_{i},\sigma_{i}}h_{i}(\sigma_{i},y(\sigma_{i}))\right|e^{-\theta \tau}\\
&\quad+\left| \mathcal{I}^{\alpha}_{\sigma_{i},\tau}f(\tau,x(\tau),\int_{\sigma_{i}}^{\tau}h(\sigma,x(\sigma))d\sigma)-\mathcal{I}^{\alpha}_{\sigma_{i},t}f(\tau,y(\tau),\int_{\sigma_{i}}^{\tau}h(\sigma,y(\sigma))d\sigma)\right|e^{-\theta \tau}  \\
&\leq \dfrac{L_{h_{i}}}{\Gamma(\beta)} \dfrac{(\sigma_{i}-\tau_{i})^{\beta-\frac{1}{2}}}{\left( 2\beta-1\right)^\frac{1}{2}(2\theta)^\frac{1}{2}}e^{\theta (\sigma_{i}-\tau)}\|x-y\|_{PB}\\
&\quad +\left[ \frac{M_{f}}{\Gamma(\alpha)}\frac{(\tau_{i+1}-\sigma_{i})^{\alpha-\frac{1}{2}}}{\left( 2\alpha-1\right)^\frac{1}{2}(2\theta)^\frac{1}{2}}+\frac{N_{f}K_{h}}{{\Gamma(\alpha+1)}} \frac{(\tau_{i+1}-\sigma_{i})^{\alpha+\frac{1}{2}}}{\left( 2\alpha+1\right)^\frac{1}{2}(2\theta)^\frac{1}{2}}\right]\|x-y\|_{PB} \\
&\leq \left( \frac{L_{h_{i}}}{\Gamma(\beta)} \frac{(\sigma_{i}-\tau_{i})^{\beta-\frac{1}{2}}}{\left( 2\beta-1\right)^\frac{1}{2}(2\theta)^\frac{1}{2}} +\frac{M_{f}}{\Gamma(\alpha)}\frac{(\tau_{i+1}-\sigma_{i})^{\alpha-\frac{1}{2}}}{\left( 2\alpha-1\right)^\frac{1}{2}(2\theta)^\frac{1}{2}} \right.\\ &\qquad\qquad\left.
\qquad+\frac{N_{f}K_{h}}{{\Gamma(\alpha+1)}} \frac{(\tau_{i+1}-\sigma_{i})^{\alpha+\frac{1}{2}}}{\left( 2\alpha+1\right)^\frac{1}{2}(2\theta)^\frac{1}{2}}\right) \|x-y\|_{PB}
\end{align*}
From the cases 1-3, we can conclude that
$$\|Tx-Ty\|_{PB}\leq L \|x-y\|_{PB}, ~ \mbox{for any} ~x,y\in PC(J,\mathbb{R}), $$
where
 \begin {align*}
L=& \max\left\lbrace \frac{N_{h_{i}}}{\Gamma(\beta)} \frac{(\sigma_{i}-\tau_{i})^{\beta-\frac{1}{2}}}{\sqrt{(2\beta-1)}\sqrt{2\theta}}+\frac{M_{f}}{\Gamma(\alpha)}\frac{(\tau_{i+1}-\sigma_{i})^{\alpha-\frac{1}{2}}}{\sqrt{(2\alpha-1)}\sqrt{2\theta}}  
\right.\\ &\qquad\qquad\left.
\qquad+\frac{N_{f}N_{h_{i}}}{{\Gamma(\alpha+1)}} \frac{(\tau_{i+1}-\sigma_{i})^{\alpha}}{\sqrt{2\alpha}\sqrt{2\theta}} ; i=0,1,\cdots m\right\rbrace
\end{align*}
Choose sufficiently large value of $\theta$ so that $L<1$. Thus, $T$ becomes  a contraction mapping and has a unique fixed point due to Banach contraction principle. This fixed point of $T$ act as unique solution of the problem \eqref{e3.1}.
\end{proof}

Next, our aim is to extend the restriction of $\alpha,\beta \in (\frac{1}{2},1)$  to $\alpha,\beta \in (0,1)$. For this purpose we use the following variants of the hypotheses  $(H2)$ and $(H3)$:
\begin{itemize}
\item[$(\tilde{H}1)$] The function $f\in C(J\times\mathbb{R}\times\mathbb{R},\mathbb{R})$ satisfies the Lipschitz  condition
$$ |f(\tau,x,y)-f(\tau,\bar{x},\bar{y})|\leq  M_{f}\tau^{\gamma_{f}}|x-\bar{x}|+N_{f}|y-\bar{y}|,\, \tau\in [0,T] ;\, \,  x,\bar{x},y,\bar{y}\in\mathbb R,$$ where $M_{f}, N_{f} >0$ and $\gamma_{f}>-\alpha.$ 
\item[$(\tilde{H}3)$]For each $i=1,2,\cdots m $; $h_{i}\in C((\sigma_{i},\tau_{i}]\times\mathbb{R},\mathbb{R})$ and satisfies Lipschitz condition
$$ |h_{i}(\tau,x)-h_{i}(\tau,\bar{x})|\leq \tau^{\gamma}N_{h_{i}}|x-\bar{x}|, \tau\in [\tau_{i},\sigma_{i}] ;\, \,  x,\bar{x}\in\mathbb R,$$ where $ N_{h_{i}}>0$ and $\gamma>-\beta.$ 
\end{itemize}
\begin{theorem}\label{Thm2}
Assume that hypotheses $(\tilde{H}1)$, $(H2)$  and $(\tilde{H}3)$ holds. Then the problem \eqref{e3.1} has a unique solution.
\end{theorem}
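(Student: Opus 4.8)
The plan is to re-run the fixed-point scheme of Theorem~\ref{Thm1} with the \emph{same} operator $T$ on $PC(J,\mathbb{R})$, whose fixed points coincide (by Lemma~\ref{lm3.1}) with the solutions of \eqref{e3.1}, and to show that $T$ is a contraction in the Bielecki norm $\|\cdot\|_{PB}$ once $\theta$ is large enough. The only place the restriction $\alpha,\beta\in(\tfrac12,1)$ entered the proof of Theorem~\ref{Thm1} was the H\"older step \eqref{Hol}, where the conjugate pair $(2,2)$ forces $2(\beta-1)>-1$. Under the weighted Lipschitz hypotheses $(\tilde{H}1)$ and $(\tilde{H}3)$ I would replace \eqref{Hol} by a H\"older estimate with a \emph{variable} conjugate pair $(p,q)$, together with Lemma~\ref{Pru}.

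For the impulse part, let $\tau\in(\tau_i,\sigma_i]$; hypothesis $(\tilde{H}3)$ gives
$$\bigl|\mathcal{I}^{\beta}_{\tau_i,\tau}h_i(\tau,x(\tau))-\mathcal{I}^{\beta}_{\tau_i,\tau}h_i(\tau,y(\tau))\bigr|\le\frac{N_{h_i}}{\Gamma(\beta)}\,\|x-y\|_{PB}\int_{\tau_i}^{\tau}(\tau-\sigma)^{\beta-1}\sigma^{\gamma}e^{\theta\sigma}\,d\sigma .$$
Applying H\"older with exponents $p$ and $q=\tfrac{p}{p-1}$ and peeling off $e^{\theta\sigma}$, the exponential factor is $\bigl(\int_{\tau_i}^{\tau}e^{q\theta\sigma}d\sigma\bigr)^{1/q}\le(q\theta)^{-1/q}e^{\theta\tau}$, while after enlarging the domain to $[0,\tau]$ the remaining factor $\bigl(\int_{0}^{\tau}(\tau-\sigma)^{p(\beta-1)}\sigma^{p\gamma}d\sigma\bigr)^{1/p}$ equals, by Lemma~\ref{Pru} (with its $\alpha$ and $p$ taken to be $1$ and its other two parameters $p(\beta-1)+1$ and $p\gamma+1$), a constant multiple of a power of $\tau$. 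Lemma~\ref{Pru} requires $p(\beta-1)+1>0$ and $p\gamma+1>0$; since $\gamma>-\beta$ forces $|\gamma|<\beta<1$, both $\tfrac1{1-\beta}$ and $\tfrac1{|\gamma|}$ exceed $1$, so a valid $p>1$ exists, and taking $p$ close enough to $1$ also keeps the resulting exponent of $\tau$ nonnegative (this matters only near the left endpoint $0$), so that power is bounded on $J$. The outcome is a bound $c\,\theta^{-1/q}e^{\theta\tau}\|x-y\|_{PB}$ with $1/q>0$. The $f$-term is handled the same way: $(\tilde{H}1)$ and $(H2)$ --- the latter exactly as in Theorem~\ref{Thm1}, which produces the auxiliary $\mathcal{I}^{1}_{\sigma_i,\tau}$ and hence a factor $\theta^{-1}e^{\theta\tau}$ --- reduce matters to bounding $\int_{\sigma_i}^{\tau}(\tau-\sigma)^{\alpha-1}\sigma^{\gamma_f}e^{\theta\sigma}d\sigma$ and $\int_{\sigma_i}^{\tau}(\tau-\sigma)^{\alpha-1}e^{\theta\sigma}d\sigma$, each $\le c\,\theta^{-\delta}e^{\theta\tau}$ with $\delta>0$ by the same H\"older$+$Lemma~\ref{Pru} device, now using $\gamma_f>-\alpha$.

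Assembling these estimates over the three cases $\tau\in(\tau_i,\sigma_i]$, $\tau\in(0,\tau_1]$, $\tau\in(\sigma_i,\tau_{i+1}]$ exactly as in the proof of Theorem~\ref{Thm1} --- in the last case the impulse contribution carries the harmless extra factor $e^{\theta(\sigma_i-\tau)}\le1$ since $\sigma_i\le\tau$ there --- gives $\|Tx-Ty\|_{PB}\le L(\theta)\|x-y\|_{PB}$, where $L(\theta)$ is a maximum of finitely many terms, each $\to0$ as $\theta\to\infty$. Fixing $\theta$ with $L(\theta)<1$ makes $T$ a contraction, and the Banach contraction principle delivers its unique fixed point, which is the unique solution of \eqref{e3.1}.

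I expect the real (if still elementary) obstacle to be purely one of bookkeeping: choosing the H\"older exponent $p>1$ so that \emph{both} singular factors $(\tau-\sigma)^{p(\beta-1)}$ and $\sigma^{p\gamma}$ (respectively with $\alpha,\gamma_f$) stay integrable and, simultaneously, so that the $\tau$-power emerging from Lemma~\ref{Pru} is nonnegative on the subinterval abutting $0$; this is exactly where the standing assumptions $\gamma>-\beta$ and $\gamma_f>-\alpha$ are used, and one must also check that enlarging the integration interval from $[\tau_i,\tau]$ (or $[\sigma_i,\tau]$) to $[0,\tau]$ leaves Lemma~\ref{Pru} applicable verbatim. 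Everything else is the contraction argument of Theorem~\ref{Thm1} repeated line by line.
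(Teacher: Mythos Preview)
Your proposal is correct and follows essentially the same route as the paper: re-use the operator $T$, replace the H\"older pair $(2,2)$ of \eqref{Hol} by a general pair $(p,q)$, and invoke Lemma~\ref{Pru} (the Beta-function identity) to control the mixed singular factor $(\tau-\sigma)^{\alpha-1}\sigma^{\gamma_f}$ (respectively $(\tau-\sigma)^{\beta-1}\sigma^{\gamma}$), so that every piece of the contraction constant picks up a negative power of $\theta$. The paper does precisely this, choosing exponents $\sigma,\sigma_1>1$ with $\sigma(\alpha-1)+1>0$, $\sigma\gamma_f+1>0$ and the analogous conditions for $\beta,\gamma$, and it also arranges (via the condition $\sigma\gamma_f>\sigma(1-\alpha)-1$) that the $\tau$-exponent coming out of Lemma~\ref{Pru} is positive --- exactly your ``take $p$ close to $1$'' remark.

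Two small comments. First, the assertion ``$\gamma>-\beta$ forces $|\gamma|<\beta$'' is not literally true for $\gamma\ge 0$; but in that case $p\gamma+1>0$ is automatic, so the existence of an admissible $p>1$ is unaffected. Second, for the $N_fK_h$ term the paper combines $\mathcal{I}^{\alpha}$ and $\mathcal{I}^{1}$ into $\mathcal{I}^{\alpha+1}$ and applies the $(2,2)$ H\"older estimate \eqref{in3.7} directly (which is legitimate since $(\tau-\sigma)^{\alpha}$ is nonsingular for all $\alpha>0$), whereas you extract a $\theta^{-1}$ from $\mathcal{I}^{1}$ and then H\"older the remaining $\mathcal{I}^{\alpha}$; both give a negative power of $\theta$ and the difference is purely cosmetic.
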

\begin{proof} Consider the operator $T$ defined  in the Theorem \ref{Thm1} .
By $(\tilde{H}1)$, $\gamma_{f}>-\alpha=(1-\alpha)-1$ where $\alpha\in (0,1)$. Choose $\sigma>1$ such that $\sigma \,\gamma_{f}> \sigma(1-\alpha)-1 $ and $\sigma(\alpha-1)+1 >0.$ Define $\sigma^{*}=\dfrac{\sigma}{\sigma-1}$. Then  $\sigma+\sigma^{*}=1.$
By $(\tilde{H}3)$, for any $\tau\in (\sigma_{i},\tau_{i+1}]$, we have
 \begin{align}\label{uh3.17}
&\left|\mathcal{I}^{\alpha}_{\sigma_{i},\tau}f\left(\tau,x(\tau),\int_{\sigma_{i}}^{\tau}h(\sigma,x(\sigma))d\sigma\right)-\mathcal{I}^{\alpha}_{\sigma_{i},\tau}f\left(\tau,y(\tau),\int_{\sigma_{i}}^{\tau}h(\sigma,y(\sigma)\right)d\sigma) \right|\nonumber\\
& \leq \mathcal{I}^{\alpha}_{\sigma_{i},\tau}\left|f\left(\tau,x(\tau),\mathcal{I}^{1}_{\sigma_{i},\tau}h(\tau,x(\tau)\right)-f\left(\tau,y(\tau),\mathcal{I}^{1}_{\sigma_{i},\tau}h(\tau,y(\tau)\right) \right|\nonumber\\
&\leq \mathcal{I}^{\alpha}_{\sigma_{i},\tau}\left(M_{f}\tau^{\gamma_{f}}|x(t)-y(\tau)|+N_{f}|\mathcal{I}^{1}_{\sigma_{i},\tau}h(\tau,y(\tau))-\mathcal{I}^{1}_{\sigma_{i},\tau}h(\tau,y(\tau))|\right)\nonumber\\
&\leq \mathcal{I}^{\alpha}_{\sigma_{i},\tau}\left( M_{f}\tau^{\gamma_{f}}|x(\tau)-y(\tau)|+N_{f}\mathcal{I}^{1}_{\sigma_{i},\tau} |h(\tau,x(\tau))-h(\tau,y(\tau))|\right) \nonumber\\
&\leq \mathcal{I}^{\alpha}_{\sigma_{i},\tau}\left( M_{f}\tau^{\gamma_{f}}|x(\tau)-y(\tau)|+N_{f}\mathcal{I}^{1}_{\sigma_{i},\tau}K_{h} |x(\tau)-y(\tau)|\right) \nonumber\\
&= M_{f}\mathcal{I}^{\alpha}_{\sigma_{i},\tau}\tau^{\gamma_{f}}|x(\tau)-y(\tau)|+N_{f}K_{h}\mathcal{I}^{\alpha}_{\sigma_{i},\tau}\left( \mathcal{I}^{1}_{\sigma_{i},\tau} |x(\tau)-y(\tau)|\right)\nonumber  \\
&= M_{f}\mathcal{I}^{\alpha}_{\sigma_{i},\tau}\left( \tau^{\gamma_{f}}|x(\tau)-y(\tau)|\right) +N_{f}K_{h}\mathcal{I}^{\alpha+1}_{\sigma_{i},\tau} \left( |x(\tau)-y(\tau)|\right) \nonumber\\
&=\frac{M_{f}}{\Gamma(\alpha)}\int_{\sigma_{i}}^{\tau}(\tau-\sigma)^{\alpha-1}\sigma^{\gamma_{f}}|x(\sigma)-y(\sigma)|d\sigma+\frac{N_{f}K_{h}}{{\Gamma(\alpha+1)}} \int_{\sigma_{i}}^{\tau}(\tau-\sigma)^{\alpha}|x(\sigma)-y(\sigma)|d\sigma\nonumber\\
&\leq \frac{M_{f}}{\Gamma(\alpha)}\int_{\sigma_{i}}^{\tau}(\tau-\sigma)^{\alpha-1}\sigma^{\gamma_{f}}e^{\theta \sigma}\left( \sup_{\sigma\in (\sigma_{i},\tau_{i+1}]}e^{-\theta \sigma}|x(\sigma)-y(\sigma)|\right)d\sigma\nonumber\\
&\qquad+\frac{N_{f}K_{h}}{{\Gamma(\alpha+1)}} \int_{\sigma_{i}}^{\tau}(\tau-\sigma)^{\alpha}|x(\sigma)-y(\sigma)|e^{\theta \sigma}\left( \sup_{\sigma\in (\sigma_{i},\tau_{i+1}]}e^{-\theta \sigma}|x(\sigma)-y(\sigma)|\right)d\sigma \nonumber\\
&\leq\frac{M_{f}}{\Gamma(\alpha)}\|x-y\|_{PB}\int_{\sigma_{i}}^{\tau}(\tau-\sigma)^{\alpha-1}\sigma^{\gamma_{f}}e^{\theta \sigma}d\sigma+\frac{N_{f}K_{h}}{{\Gamma(\alpha+1)}}\|x-y\|_{PB} \int_{\sigma_{i}}^{\tau}(\tau-\sigma)^{\alpha}e^{\theta \sigma}d\sigma
\end{align}
Using Holders inequality, we get
\begin{align*}
\int_{\sigma_{i}}^{\tau}(\tau-\sigma)^{\alpha-1}\sigma^{\gamma_{f}}e^{\theta \sigma}d\sigma\leq\left( \int_{\sigma_{i}}^{\tau}(\tau-\sigma)^{\sigma(\alpha-1)}\sigma^{\sigma\gamma_{f}}d\sigma\right)^{\frac{1}{\sigma}}\left( \int_{\sigma_{i}}^{\tau}e^{\theta \sigma^{*} \sigma}d\sigma\right)^{\frac{1}{\sigma^{*}}}  
\end{align*}
But
\begin{equation*}
\left( \int_{\sigma_{i}}^{\tau}e^{\theta \sigma^{*} \sigma}d\sigma\right)^{\frac{1}{\sigma^{*}}}=\left( \frac{e^{\theta \sigma^{*} \tau}-e^{\theta \sigma^{*} \sigma_{i}}}{\theta \sigma^{*} }\right)^{\frac{1}{\sigma^{*}}}\leq \left( \frac{e^{\theta \sigma^{*} \tau}}{\theta \sigma^{*} }\right) ^{\frac{1}{\sigma^{*}}}= \frac{e^{\theta \tau}}{(\theta \sigma^{*})^{\frac{1}{\sigma^{*}}}}
\end{equation*}
By  lemma \ref{Pru}, 
\begin{align*}
 \left( \int_{\sigma_{i}}^{\tau}(\tau-\sigma)^{\sigma(\alpha-1)}\sigma^{\sigma\gamma_{f}}d\sigma\right)^{\frac{1}{\sigma}}&\leq \left( \int_{0}^{\tau}(\tau-\sigma)^{\sigma(\alpha-1)}\sigma^{\sigma\gamma_{f}}d\sigma\right)^{\frac{1}{\sigma}}\nonumber\\
 &=\left( {\tau^{\sigma(\alpha-1)+\sigma\gamma_{f}+1}}\mathbb{B}(\sigma\gamma_{f}+1,\sigma(\alpha-1)+1) \right)^{\frac{1}{\sigma}}\nonumber\\
 & \leq \omega_{1}, 
\end{align*}
where $\omega_{1}=\left( {T^{\sigma(\alpha-1)+\sigma\gamma_{f}+1}}\mathbb{B}(\sigma\gamma_{f}+1,\sigma(\alpha-1)+1) \right)^{\frac{1}{\sigma}}$. Therefore
\begin{equation}\label{e3.17}
\int_{\sigma_{i}}^{\tau}(\tau-\sigma)^{\alpha-1}\sigma^{\gamma_{f}}e^{\theta \sigma}d\sigma\leq \omega_{1}\frac{e^{\theta \tau}}{(\theta \sigma^{*})^{\frac{1}{\sigma^{*}}}}
\end{equation}
Using inequations \eqref{in3.7} and \eqref{e3.17}, from \eqref{uh3.17} we get
\begin{align}\label{e3.19}
&\left|\mathcal{I}^{\alpha}_{\sigma_{i},\tau}f\left(\tau,x(\tau),\int_{\sigma_{i}}^{\tau}h(\sigma,x(\sigma)\right)d\sigma)-\mathcal{I}^{\alpha}_{\sigma_{i},\tau}f\left(\tau,y(\tau),\int_{\sigma_{i}}^{\tau}h(\sigma,y(\sigma))d\sigma\right)\right|e^{-\theta \tau}\nonumber\\
&\leq \left( \omega_{1}\frac{M_{f}}{\Gamma(\alpha)(\theta \sigma^{*})^{\frac{1}{\sigma^{*}}}} +\frac{N_{f}K_{h}}{{\Gamma(\alpha+1)}}\frac{(\tau_{i+1}-\sigma_{i})^{\alpha+\frac{1}{2}}}{\left( 2\alpha+1\right)^\frac{1}{2}(2\theta)^\frac{1}{2}}\right) \|x-y\|_{PB}
\end{align}
On the similar line by $(\tilde{H}3)$, as discussed above, we can chose $\sigma_{1}>1$ such that $\sigma_{1}\gamma>\sigma_{1}(1-\beta)-1$ and $\sigma_{1}(\beta-1)+1>0.$ Then for  $\sigma_{1}^{*}=\frac{\sigma_{1}}{\sigma_{1}-1}$ we have $\sigma_{1}+\sigma_{1}^{*}=1.$
By $(\tilde{H}3)$ and the Holders inequality, for any $\tau\in (\tau_{i},\sigma_{i}]$,
\begin{align*}\label{uh3.26}
&|\mathcal{I}^{\beta}_{\tau_{i},\tau}h_{i}(\tau,x(\tau))-\mathcal{I}^{\beta}_{\tau_{i},\tau}h_{i}(\tau,y(\tau))|\\
&\leq \dfrac{L_{h_{i}}}{\Gamma(\beta)}\int_{\tau_{i}}^{\tau}(\tau-\sigma)^{\beta-1}\sigma^{\gamma}\left|x(\sigma)-y(\sigma)\right|d\sigma\\
&\leq \dfrac{L_{h_{i}}}{\Gamma(\beta)}\|x-y\|_{PB}\int_{\tau_{i}}^{\tau}(\tau-\sigma)^{\beta-1}\sigma^{\gamma}e^{\theta \sigma}d\sigma \\
&\leq \dfrac{L_{h_{i}}}{\Gamma(\beta)}\|x-y\|_{PB}\left( \int_{\tau_{i}}^{\tau}(\tau-\sigma)^{\sigma_{1}(\beta-1)}\sigma^{\sigma_{1}\gamma}d\sigma\right)^{\frac{1}{\sigma_{1}}}\left( \int_{\tau_{i}}^{\tau}e^{\theta \sigma_{1}^{*} \sigma}d\sigma\right)^{\frac{1}{\sigma_{1}^{*}}}  \\
&\leq \dfrac{L_{h_{i}}}{\Gamma(\beta)}\|x-y\|_{PB}\left( \int_{\tau_{i}}^{\tau}(\tau-\sigma)^{\sigma_{1}(\beta-1)}\sigma^{\sigma_{1}\gamma}d\sigma\right)^{\frac{1}{\sigma_{1}}}\left( \int_{\tau_{i}}^{\tau}e^{\theta \sigma_{1}^{*} \sigma}d\sigma\right)^{\frac{1}{\sigma_{1}^{*}}}  \\
&\leq \dfrac{L_{h_{i}}}{\Gamma(\beta)}\|x-y\|_{PB}\left( \int_{0}^{\tau}(\tau-\sigma)^{\sigma_{1}(\beta-1)}\sigma^{\sigma_{1}\gamma}d\sigma\right)^{\frac{1}{\sigma_{1}}} \frac{e^{\theta \tau}}{(\theta \sigma_{1}^{*})^{\frac{1}{\sigma_{1}^{*}}}}\\
&\leq \dfrac{L_{h_{i}}\,\omega_{2} }{\Gamma(\beta)\,(\theta \sigma_{1}^{*})^{\frac{1}{\sigma_{1}^{*}}}}\,e^{\theta \tau}\,\|x-y\|_{PB}, \end{align*}
where $\omega_{2}=\left( {T^{\sigma_{1}(\beta-1)+\sigma_{1}\gamma+1}}\mathbb{B}(\sigma_{1}\gamma+1,\sigma_{1}(\beta-1)+1) \right)^{\frac{1}{\sigma_{1}}}$.
Thus
\begin{equation}\label{e3.25}
\left| \mathcal{I}^{\beta}_{\tau_{i},\tau}h_{i}(\tau,x(\tau))-\mathcal{I}^{\beta}_{\tau_{i},\tau}h_{i}(\tau,y(\tau))\right|e^{-\theta \tau}\leq\dfrac{L_{h_{i}}\omega_{2}}{\Gamma(\beta)(\theta \sigma_{1}^{*})^{\frac{1}{\sigma_{1}^{*}}}}\|x-y\|_{PB},\;\;\tau\in (\tau_{i},\sigma_{i}]
\end{equation}
By definition of an operator $T$ and using inequations \eqref{e3.19} and \eqref{e3.25}, for any $x,y\in PC(J,\mathbb{R})$, we have:\\
\noindent  Case~(i): For $\tau\in (\sigma_{i},\tau_{i+1}],\; i=1,2,\cdots m,$
\begin{align*}
|& T  x(\tau)-Ty(\tau)|e^{-\theta \tau}\nonumber\\
&\leq    
 \mathcal{I}^{\beta}_{\tau_{i},\sigma_{i}}|h_{i}(\sigma_{i},x(\sigma_{i}))-h_{i}(\sigma_{i},y(\sigma_{i}))|e^{-\theta \tau}\nonumber\\
&\quad+\mathcal{I}^{\alpha}_{\sigma_{i},\tau}\left|f\left(\tau,x(\tau),\int_{\sigma_{i}}^{\tau}h(\sigma,x(\sigma))d\sigma \right)-f\left(\tau,y(\tau),\int_{\sigma_{i}}^{\tau}h(\sigma,y(\sigma))d\sigma\right)\right|e^{-\theta \tau}\nonumber\\
&\leq \left(\dfrac{L_{h_{i}}\omega_{2}}{\Gamma(\beta)(\theta \sigma_{1}^{*})^{\frac{1}{\sigma_{1}^{*}}}}+ \omega_{1}\frac{M_{f}}{\Gamma(\alpha)(\theta \sigma^{*})^{\frac{1}{\sigma^{*}}}} +\frac{N_{f}K_{h}}{{\Gamma(\alpha+1)}}\frac{(\tau_{i+1}-\sigma_{i})^{\alpha+\frac{1}{2}}}{\left( 2\alpha+1\right)^\frac{1}{2}(2\theta)^\frac{1}{2}}\right) \|x-y\|_{PB}
\end{align*}
\noindent  Case~(ii): For $\tau\in (0,\tau_{1}]$, 
\begin{align*}
&| T  x(\tau)-Ty(\tau)|e^{-\theta \tau}\\
&\leq    
 \mathcal{I}^{\alpha}_{0,\tau}\left|f\left(\tau,x(\tau),\int_{0}^{\tau}h(\sigma,x(\sigma))d\sigma\right)-f\left(\tau,y(\tau),\int_{0}^{\tau}h(\sigma,y(\sigma))d\sigma\right)\right|e^{-\theta \tau}\nonumber\\
 &\leq
\left( \omega_{1}\frac{M_{f}}{\Gamma(\alpha)(\theta \sigma^{*})^{\frac{1}{\sigma^{*}}}} +\frac{N_{f}K_{h}}{{\Gamma(\alpha+1)}}\frac{\tau_{1}^{\alpha+\frac{1}{2}}}{\left( 2\alpha+1\right)^\frac{1}{2}(2\theta)^\frac{1}{2}}\right) \|x-y\|_{PB}.
 \end{align*}
 \noindent  Case~(iii): For  $\tau\in(\sigma_{i},\tau_{i}],\; i=1,2,\cdots, m$, 
 \begin{align*}
 | T  x(\tau)-Ty(\tau)|e^{-\theta \tau}&\leq\left| \mathcal{I}^{\beta}_{\tau_{i},\tau}h_{i}(\tau,x(\tau))-\mathcal{I}^{\beta}_{\tau_{i},\tau}h_{i}(\tau,y(\tau))\right|e^{-\theta \tau}\\
 &\leq\dfrac{L_{h_{i}}\omega_{2}}{\Gamma(\beta)(\theta \sigma_{1}^{*})^{\frac{1}{\sigma_{1}^{*}}}}\|x-y\|_{PB}
 \end{align*}
 By cases (i)-(iii), we have
 $$\|Tx-Ty\|_{PB}\leq L_{1} \|x-y\|_{PB}, ~ \mbox{for any} ~x,y\in PC(J,\mathbb{R}), $$
 where
 \begin {align*}
L_{1}=& \max\left\lbrace \dfrac{L_{h_{i}}\omega_{2}}{\Gamma(\beta)(\theta \sigma_{1}^{*})^{\frac{1}{\sigma_{1}^{*}}}}+ \omega_{1}\frac{M_{f}}{\Gamma(\alpha)(\theta \sigma^{*})^{\frac{1}{\sigma^{*}}}}  
\right.\\ &\qquad\qquad\left.
\qquad+\frac{N_{f}K_{h}}{{\Gamma(\alpha+1)}}\frac{(\tau_{i+1}-\sigma_{i})^{\alpha+\frac{1}{2}}}{\left( 2\alpha+1\right)^\frac{1}{2}(2\theta)^\frac{1}{2}}:\;\;  i=0,1,\cdots m\right\rbrace
\end{align*}
By choosing sufficient large value of $\theta$, we get $L_{1} <1$ and in this case $T$ is a contraction and hence $T$ has a unique fixed point, which is the unique solution of \eqref{e3.1}.
\end{proof}

\section{Bielecki-Ulam-Hyers Stability }
We adopt the idea of Wang and Zang \cite {wang1} to investigate the  concepts of Bielecki-Ulam’s type stability 
for the class of nonlinear fractional  order Volterra integrodifferential equation \eqref{e1.1}.

For any $\theta>0,\;\epsilon>0,\;\psi\geq 0,\;\varphi\in PC(J,\mathbb{R_{+}})$ is nondecreasing and $\alpha,\beta \in (0,1),$ consider the following inequalities.
\begin{equation}\label{e2.1}
\begin{cases}
&\left| ^c_{\sigma_{i}}\mathcal{D}_{\tau}^\alpha y(\tau)-f(\tau,y(\tau),\int_{\sigma_{i}}^{\tau}h(\sigma,y(\sigma))d\sigma)\right|\leq\epsilon,~ \tau\in(\sigma_{i},\tau_{i+1}],\; i=0,1,\cdots ,m,\\
& |y(\tau)-\mathcal{I}^{\beta}_{\tau_{i},\tau}h_{i}(\tau,y(\tau))|\leq\epsilon,~ \tau\in(\tau_{i},\sigma_{i}],\; i=1,2,\cdots ,m.  \\
\end{cases}
\end{equation}
and
\begin{equation}\label{e2.2}
\begin{cases}
&\left| ^c_{\sigma_{i}}\mathcal{D}_{\tau}^\alpha y(\tau)-f(\tau,y(\tau),\int_{\sigma_{i}}^{\tau}h(\sigma,y(\sigma))d\sigma)\right|\leq\varphi(\tau),~ \tau\in(\sigma_{i},\tau_{i+1}],\; i=0,1,\cdots ,m,\\
 & |y(\tau)-\mathcal{I}^{\beta}_{\tau_{i},\tau}h_{i}(\tau,y(\tau))|\leq\psi,~ \tau\in(\tau_{i},\sigma_{i}],\; i=1,2,\cdots ,m. 
\end{cases} 
\end{equation}
and
\begin{equation}\label{e2.3}
\begin{cases}
&\left| ^c_{\sigma_{i}}\mathcal{D}_{\tau}^\alpha y(\tau)-f(\tau,y(\tau),\int_{\sigma_{i}}^{\tau}h(\sigma,y(\sigma))d\sigma)\right|\leq\epsilon\varphi(\tau),~ \tau\in(\sigma_{i},\tau_{i+1}],\; i=0,1,\cdots ,m,\\
 & |y(\tau)-\mathcal{I}^{\beta}_{\tau_{i},\tau}h_{i}(\tau,y(\tau))|\leq\epsilon\psi,~ \tau\in(\tau_{i},\sigma_{i}],\; i=1,2,\cdots ,m. 
\end{cases} 
\end{equation}

\begin{definition}\label{def4.1}
The equation \eqref{e1.1} is Bielecki-Ulam-Hyers stable if there exists a real number $ C_{f,\theta,h,\alpha,\beta,h_{i}}>0 $ such that for each $\epsilon>0$ for each solution $y\in PC^{1}(J,\mathbb{R})$ of inequality \eqref{e2.1} there exists a solution $x\in PC^{1}(J,\mathbb{R})$ of equation \eqref{e1.1} with $$ |y(\tau)-x(\tau)|e^{-\theta \tau}\leq C_{f,\theta,h,\alpha,\beta,h_{i}} \epsilon,\;\; \tau\in J.$$
\end{definition}
\begin{definition}\label{def4.2}
The equation \eqref{e1.1} is generalized  Bielecki-Ulam-Hyers stable if there exists $ \mathbb{\theta}_{f,\theta,h,\alpha,\beta,h_{i}}\in C(\mathbb{R_{+}},\mathbb{R_{+}}),~\mathbb{\theta}_{f,\theta,h,\alpha,\beta,h_{i}}(0)=0$ such that for each $\epsilon>0$ for each solution $y\in PC'(J,\mathbb{R})$ of inequality \eqref{e2.1} there exists a solution $x\in PC^{1}(J,\mathbb{R})$ of equation \eqref{e1.1} with
$$ |y(\tau)-x(\tau)|e^{-\theta \tau}\leq \mathbb{\theta}_{f,\theta,h,\alpha,\beta,h_{i}}(\epsilon),\;\; \tau\in J.$$
\end{definition}
\begin{definition}\label{def4.3}
The equation \eqref{e1.1} is Bielecki-Ulam-Hyers-Rassias  stable with respect to $(\phi, \psi)$ if there exists $ C_{f,\theta,h,\alpha,\beta,h_{i},\varphi}>0$ such that for each $\epsilon>0$ for each solution $y\in PC^{1}(J,\mathbb{R})$ of inequality \eqref{e2.3}
 there exists a solution $x\in PC^{1}(J,\mathbb{R})$ of equation \eqref{e1.1} with
$$ |y(\tau)-x(\tau)|e^{-\theta \tau}\leq C_{f,\theta,h,\alpha,\beta,h_{i},\varphi}\epsilon(\psi+\varphi(\tau)) ,\;\; \tau\in J.$$
\end{definition}
\begin{definition}\label{def4.4}
The equation \eqref{e1.1} is generalized  Bielecki-Ulam-Hyers-Rassias  stable with respect to $(\varphi, \psi)$ if there exists $ C_{f,\theta,h,\alpha,\beta,h_{i},\varphi}>0$ such that for each solution $y\in PC^{1}(J,\mathbb{R})$ of inequality \eqref{e2.2} there exists a solution $x\in PC^{1}(J,\mathbb{R})$ of equation \eqref{e1.1} with
$$ |y(\tau)-x(\tau)|e^{-\theta \tau}\leq C_{f,\theta,h,\alpha,\beta,h_{i},\varphi}(\psi+\varphi(\tau)) ,\;\; t\in J.$$
\end{definition}
 \begin{lemma}\label{lm4.5}
If $y\in PC^{1}(J,\mathbb{R})$ is a solution of inequality \eqref{e2.3} then $y$  satisfies the following integral inequalities
 \begin{equation}\label{le4.1}
 \begin{cases}
  & |y(\tau)-\mathcal{I}^{\beta}_{\tau_{i},\tau}h_{i}(\tau,y(\tau))|\leq \epsilon\psi,~ \tau\in(\tau_{i},\sigma_{i}],\; i=1,2,\cdots ,m,  \\
  & \left|y(\tau)-  y(0)-\mathcal{I}^{\alpha}_{0,\tau} f\left(\tau,y(\tau),\int_{0}^{\tau}h(\sigma,y(\sigma)\right)d\sigma\right|\leq \epsilon \mathcal{I}^{\alpha}_{0,\tau}\varphi(\tau), \;\text{if  $\tau\in 
  (0,\tau_{1}],$}\\
 &\left|y(\tau)-\mathcal{I}^{\beta}_{\tau_{i},\sigma_{i}} h_{i}(\sigma_{i},y(\sigma_{i}))-\mathcal{I}^{\alpha}_{\sigma_{i},\tau} f\left(\tau,y(\tau),\int_{\sigma_{i}}^{\tau}h(\sigma,y(\sigma)\right)d\sigma\right|\leq \epsilon\left( \psi+\mathcal{I}^{\alpha}_{\sigma_{i},\tau}\varphi(\tau)\right) \\
&  \tau\in (\sigma_{i},\tau_{i+1}],\; i=1,2,\cdots m.
 \end{cases}
 \end{equation} 
  \end{lemma}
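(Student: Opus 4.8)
The plan is to derive each of the three integral inequalities in \eqref{le4.1} by ``integrating'' the differential/integral inequalities \eqref{e2.3} over the appropriate subinterval, exactly mirroring the structure of the equivalent integral representation obtained in Lemma \ref{lm3.1}. The first line of \eqref{le4.1} is immediate: it is literally the second inequality of \eqref{e2.3} with no modification, so nothing needs to be done there.

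For the second line, I would work on the interval $(0,\tau_1]$ where $\sigma_0=\tau_0=0$. Starting from the inequality $\left|{}^c_{0}\mathcal{D}_{\tau}^\alpha y(\tau)-f\left(\tau,y(\tau),\int_0^\tau h(\sigma,y(\sigma))d\sigma\right)\right|\le\epsilon\varphi(\tau)$, I would set $g(\tau):={}^c_{0}\mathcal{D}_{\tau}^\alpha y(\tau)-f(\tau,y(\tau),\int_0^\tau h\,d\sigma)$, so that $|g(\tau)|\le\epsilon\varphi(\tau)$ pointwise, and then apply the Riemann--Liouville operator $\mathcal{I}^\alpha_{0,\tau}$ to the identity ${}^c_{0}\mathcal{D}_{\tau}^\alpha y(\tau)=f(\tau,y(\tau),\int_0^\tau h\,d\sigma)+g(\tau)$. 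By Lemma \ref{lm1} (with $\alpha\in(0,1)$ and $y\in PC^1$), $\mathcal{I}^\alpha_{0,\tau}[{}^c_{0}\mathcal{D}_{\tau}^\alpha y(\tau)]=y(\tau)-y(0)$, and hence
$$
y(\tau)-y(0)-\mathcal{I}^\alpha_{0,\tau}f\left(\tau,y(\tau),\int_0^\tau h(\sigma,y(\sigma))d\sigma\right)=\mathcal{I}^\alpha_{0,\tau}g(\tau).
$$
Taking absolute values and using the positivity of the kernel $(\tau-\sigma)^{\alpha-1}/\Gamma(\alpha)$ together with $|g(\sigma)|\le\epsilon\varphi(\sigma)$ gives $\left|\mathcal{I}^\alpha_{0,\tau}g(\tau)\right|\le\epsilon\,\mathcal{I}^\alpha_{0,\tau}\varphi(\tau)$, which is the desired bound.

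For the third line, on $(\sigma_i,\tau_{i+1}]$ with $i\ge1$, the argument is the same but with lower terminal $\sigma_i$: applying $\mathcal{I}^\alpha_{\sigma_i,\tau}$ to ${}^c_{\sigma_i}\mathcal{D}_{\tau}^\alpha y(\tau)=f(\tau,y(\tau),\int_{\sigma_i}^\tau h\,d\sigma)+g(\tau)$ and invoking Lemma \ref{lm1} at the point $\sigma_i$ yields $y(\tau)-y(\sigma_i)-\mathcal{I}^\alpha_{\sigma_i,\tau}f(\cdots)=\mathcal{I}^\alpha_{\sigma_i,\tau}g(\tau)$, so that
$$
\left|y(\tau)-y(\sigma_i)-\mathcal{I}^\alpha_{\sigma_i,\tau}f\left(\tau,y(\tau),\int_{\sigma_i}^\tau h(\sigma,y(\sigma))d\sigma\right)\right|\le\epsilon\,\mathcal{I}^\alpha_{\sigma_i,\tau}\varphi(\tau).
$$
It then remains to replace $y(\sigma_i)$ by $\mathcal{I}^\beta_{\tau_i,\sigma_i}h_i(\sigma_i,y(\sigma_i))$: from the second inequality of \eqref{e2.3} evaluated in the limit $\tau\to\sigma_i^-$ (or directly at $\tau=\sigma_i$, using that $y\in PC^1$ and the impulse interval is $(\tau_i,\sigma_i]$) we have $\left|y(\sigma_i)-\mathcal{I}^\beta_{\tau_i,\sigma_i}h_i(\sigma_i,y(\sigma_i))\right|\le\epsilon\psi$. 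Combining the two via the triangle inequality gives
$$
\left|y(\tau)-\mathcal{I}^\beta_{\tau_i,\sigma_i}h_i(\sigma_i,y(\sigma_i))-\mathcal{I}^\alpha_{\sigma_i,\tau}f\left(\tau,y(\tau),\int_{\sigma_i}^\tau h(\sigma,y(\sigma))d\sigma\right)\right|\le\epsilon\left(\psi+\mathcal{I}^\alpha_{\sigma_i,\tau}\varphi(\tau)\right),
$$
as claimed. The only genuinely delicate point—and the one I would state carefully—is the justification that $\mathcal{I}^\alpha_{\sigma_i,\tau}[{}^c_{\sigma_i}\mathcal{D}_{\tau}^\alpha y(\tau)]=y(\tau)-y(\sigma_i)$ applies on each subinterval: this needs $y$ restricted to $[\sigma_i,\tau_{i+1}]$ to be $C^1$ there, which is exactly what membership in $PC^1(J,\mathbb{R})$ provides, so Lemma \ref{lm1} is legitimately invoked piecewise. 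Everything else is routine monotonicity of the fractional integral applied to the pointwise bound $|g|\le\epsilon\varphi$.
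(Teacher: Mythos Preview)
Your proof is correct and follows essentially the same approach as the paper: introduce the bounded ``defect'' function $g$ (the paper calls it $H$, together with constants $H_i$ for the impulse intervals), apply $\mathcal{I}^\alpha_{\sigma_i,\tau}$ and Lemma~\ref{lm1} piecewise to pass to the integral form, and then take absolute values. Your treatment of the third inequality---first bounding $|y(\tau)-y(\sigma_i)-\mathcal{I}^\alpha_{\sigma_i,\tau}f|$ and then combining with $|y(\sigma_i)-\mathcal{I}^\beta_{\tau_i,\sigma_i}h_i(\sigma_i,y(\sigma_i))|\le\epsilon\psi$ via the triangle inequality---is in fact slightly more transparent than the paper's version, which packages everything through Lemma~\ref{lm3.1} at once.
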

\begin{proof}
If $y\in PC^{1}(J,\mathbb{R})$ is a solution of the inequality \eqref{e2.3} then there exists $H\in PC(J,\mathbb{R})$ and constants $H_{i},\;i=1,2,\cdots m$ (which depend on $y$) such that
\begin{itemize}
\item[(i)] $| H(\tau)|\leq \epsilon\,\varphi(\tau),~ \tau\in J \;\; \text{and} \;\; |H_{i}|\leq  \epsilon\,\psi \;\;\text{for}\;\; i=1,2,\cdots m.$
\item[(ii)] 
$
\begin{cases}
^c_{\sigma_{i}}\mathcal{D}_{\tau}^\alpha y(\tau) =f\left(\tau,y(\tau),\int_{\sigma_{i}}^{\tau}h(\sigma,y(\sigma))d\sigma\right)+ H(\tau),\, \tau\in(\sigma_{i},\tau_{i+1}],\; i=0,1,\cdots ,m, \\
y(\tau) =\mathcal{I}^{\beta}_{\tau_{i},\tau}h_{i}(\tau,y(\tau))+ H_{i},\;\tau\in (\tau_{i},\sigma_{i}],\;i=1,2,\cdots m,\\
\end{cases}
$
\end{itemize}
In the view of Lemma \ref{lm3.1}, above equation is equivalent to the integral equations
\begin{equation*}
y(\tau) =
\begin{cases}
\mathcal{I}^{\beta}_{\tau_{i},\tau}h_{i}(\tau,y(\tau))+\epsilon H_{i}, \; \tau\in(\tau_{i},\sigma_{i}],\; i=1,\cdots ,m,   \\
y(0)+\mathcal{I}^{\alpha}_{0,\tau}\left[ f\left(\tau,y(\tau),\int_{0}^{\tau}h(\sigma,y(\sigma))d\sigma\right)+\epsilon H(\tau)\right],  \tau\in [0,\tau_{1}],\\
\mathcal{I}^{\beta}_{\tau_{i},\sigma_{i}} h_{i}(\sigma_{i},y(\sigma_{i}))+\epsilon H_{i} +\mathcal{I}^{\alpha}_{\sigma_{i},\tau}\left[ f\left(\tau,y(\tau),\int_{\sigma_{i}}^{\tau}h(\sigma,y(\sigma))d\sigma\right)+\epsilon H(\tau)\right],\\
  \tau\in (\sigma_{i},\tau_{i+1}],i=1,\cdots m.
\end{cases}
\end{equation*}
For any $\tau\in (\sigma_{i},\tau_{i+1}],\; i=1,2,\cdots m,$ 
\begin{align*}
\left| y(\tau)-  \mathcal{I}^{\beta}_{\tau_{i},\sigma_{i}} h_{i}(\sigma_{i},y(\sigma_{i}))-  \mathcal{I}^{\alpha}_{\sigma_{i},\tau} f\left(\tau,y(\tau),\int_{\sigma_{i}}^{\tau}h(\sigma,y(\sigma)\right)d\sigma\right|
& =| H_{i}+ \mathcal{I}^{\alpha}_{\sigma_{i},\tau}H(\tau)|\\
&\leq | H_{i}|+ \mathcal{I}^{\alpha}_{\sigma_{i},\tau}|H(\tau)|\\
& \leq \epsilon\left( \psi+\mathcal{I}^{\alpha}_{\sigma_{i},\tau}\varphi(\tau)\right). 
\end{align*}
For $\tau\in (0,\tau_{1}]$,
\begin{align*}
\left|y(\tau)- y(0)-\mathcal{I}^{\alpha}_{0,\tau} f\left(\tau,y(\tau),\int_{0}^{\tau}h(\sigma,y(\sigma)\right)d\sigma\right|
=\mathcal{I}^{\alpha}_{0,\tau}| H(\tau)|\leq \epsilon \mathcal{I}^{\alpha}_{0,\tau}\varphi(\tau).
\end{align*}
For $\tau\in (\tau_{i},\sigma_{i}],\; i=1,2,\cdots m,$
$$\left|y(\tau)-\mathcal{I}^{\beta}_{\tau_{i},\sigma_{i}} h_{i}(\tau,y(\tau))\right|
\leq | H_{i}|\leq \epsilon \psi,\; \tau\in (\tau_{i},\sigma_{i}],\; i=1,2,\cdots m.$$ 
The last three inequalities are the required equivalent integral inequalities in \eqref{le4.1}.
\end{proof}
Following additional assumption is needed to prove the Bielecki-Ulam-Hyers-Rassias stability of equation \eqref{e1.1}.
\begin{itemize}
\item[(H4)] Let $\varphi\in C(J,\mathbb{R}_{+})$  is nondecreasing and there  exists $c_{\varphi>0}$ such that 
$$\mathcal{I}^{\alpha}_{0,\tau} \varphi\leq c_\varphi \varphi(\tau),\; \tau\in J.$$  \end{itemize}
\begin{theorem}\label{Thm4.5}
Assume that hypotheses $(\tilde{H}1)$, $(H2)$, $(\tilde{H}3)$ and $(H4)$ holds. Then, the equation \eqref{e1.1} is  Bielecki-Ulam-Hyers-Rassias stable with respect to $(\varphi, \psi)$, where $\alpha,~\beta \in (0,1)$.
\end{theorem}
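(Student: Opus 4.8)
The strategy is to let $y \in PC^{1}(J,\mathbb{R})$ be any solution of the inequality \eqref{e2.3}, let $x \in PC^{1}(J,\mathbb{R})$ be the unique solution of \eqref{e1.1} guaranteed by Theorem \ref{Thm2} (which applies because $(\tilde H1)$, $(H2)$, $(\tilde H3)$ hold), chosen with the same initial value $x(0)=y(0)$, and then estimate $|y(\tau)-x(\tau)|e^{-\theta\tau}$ piecewise on the subintervals $(\tau_i,\sigma_i]$ and $(\sigma_i,\tau_{i+1}]$. The key input is Lemma \ref{lm4.5}, which converts the differential/impulsive inequality into the three integral inequalities in \eqref{le4.1}; combined with the integral representation of $x$ from Lemma \ref{lm3.1}, the triangle inequality gives, on each piece, a bound of the form $|y(\tau)-x(\tau)| \le (\text{defect term}) + (\text{Lipschitz integral term in } |y-x|)$. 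The defect terms are controlled using $(H4)$: on $(0,\tau_1]$ one gets $\epsilon\,\mathcal{I}^{\alpha}_{0,\tau}\varphi \le \epsilon c_\varphi \varphi(\tau)$, and on $(\sigma_i,\tau_{i+1}]$ one gets $\epsilon(\psi + \mathcal{I}^{\alpha}_{\sigma_i,\tau}\varphi) \le \epsilon(\psi + c_\varphi\varphi(\tau))$ (using monotonicity of $\varphi$ to push the lower terminal down to $0$), while on $(\tau_i,\sigma_i]$ the defect is simply $\epsilon\psi \le \epsilon(\psi+\varphi(\tau))$.

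Next I would handle the Lipschitz integral terms exactly as in the proof of Theorem \ref{Thm2}: apply $(\tilde H1)$, $(H2)$, $(\tilde H3)$, insert $e^{\theta\sigma}e^{-\theta\sigma}$, pull out $\|y-x\|_{PB}$ (or rather the running supremum), and bound the remaining integrals $\int (\tau-\sigma)^{\alpha-1}\sigma^{\gamma_f}e^{\theta\sigma}\,d\sigma$, $\int(\tau-\sigma)^{\alpha}e^{\theta\sigma}\,d\sigma$, $\int(\tau-\sigma)^{\beta-1}\sigma^{\gamma}e^{\theta\sigma}\,d\sigma$ by $\omega_1\frac{e^{\theta\tau}}{(\theta\sigma^{*})^{1/\sigma^{*}}}$, $\frac{(\tau_{i+1}-\sigma_i)^{\alpha+1/2}}{(2\alpha+1)^{1/2}(2\theta)^{1/2}}e^{\theta\tau}$, $\omega_2\frac{e^{\theta\tau}}{(\theta\sigma_1^{*})^{1/\sigma_1^{*}}}$ respectively, using Lemma \ref{Pru} and Hölder's inequality just as in \eqref{e3.17}, \eqref{in3.7}, \eqref{e3.25}. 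After dividing by $e^{\theta\tau}$ this produces, on each of the three types of interval, an inequality of the shape
\[
|y(\tau)-x(\tau)|e^{-\theta\tau} \le A\,\epsilon\bigl(\psi+\varphi(\tau)\bigr) + L_1\,\|y-x\|_{PB},
\]
where $L_1$ is precisely the contraction constant from Theorem \ref{Thm2} and $A$ is an explicit constant depending on $c_\varphi$, the Lipschitz constants, $\Gamma$-values, and the mesh $\tau_i,\sigma_i$.

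Taking the supremum over $\tau\in J$ on the left and using $\varphi$ nondecreasing (so $\psi+\varphi(\tau)\le \psi+\varphi(T)$ is bounded, and more to the point the bound can be kept pointwise in $\tau$) yields $\|y-x\|_{PB} \le A\,\epsilon\,\bigl(\psi+\varphi(T)\bigr) + L_1\|y-x\|_{PB}$, hence $\|y-x\|_{PB}\le \frac{A}{1-L_1}\,\epsilon\,(\psi+\varphi(T))$ provided $\theta$ is chosen large enough that $L_1<1$; reinserting this into the pointwise inequality above gives
\[
|y(\tau)-x(\tau)|e^{-\theta\tau} \le C_{f,\theta,h,\alpha,\beta,h_i,\varphi}\,\epsilon\,\bigl(\psi+\varphi(\tau)\bigr),\qquad \tau\in J,
\]
with $C_{f,\theta,h,\alpha,\beta,h_i,\varphi} = A + \frac{L_1 A}{1-L_1}(\text{ratio absorbing }\varphi)$, which is exactly the conclusion of Definition \ref{def4.3}. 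The main obstacle is bookkeeping rather than conceptual: one must be careful that the defect term on $(\sigma_i,\tau_{i+1}]$ really decomposes as an impulse part $\epsilon\psi$ plus an integral part $\epsilon\mathcal{I}^{\alpha}_{\sigma_i,\tau}\varphi$ (this is where Lemma \ref{lm4.5} and $(H4)$ are both essential), and that the recursive estimate closes uniformly over all $m+1$ pieces — i.e. the same $\theta$ makes $L_1<1$ on every subinterval simultaneously, which is guaranteed since $L_1$ is defined as a maximum over $i=0,1,\dots,m$. Once $L_1<1$ is fixed, the passage from the pointwise inequality to the global Bielecki estimate and back is the standard "absorb the linear term" argument and requires no further ideas.
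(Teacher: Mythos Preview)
Your proposal is correct and follows essentially the same route as the paper: let $y$ satisfy \eqref{e2.3}, take $x$ to be the solution of \eqref{e1.1} with $x(0)=y(0)$, invoke Lemma~\ref{lm4.5} and Lemma~\ref{lm3.1}, and on each subinterval bound $|y(\tau)-x(\tau)|e^{-\theta\tau}$ by a defect term (controlled via $(H4)$) plus a Lipschitz integral term estimated exactly as in the proof of Theorem~\ref{Thm2}.

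The only organizational difference is in how the linear term is absorbed. The paper works interval-by-interval: on each $(\sigma_i,\tau_{i+1}]$ and $(\tau_i,\sigma_i]$ it bounds the Lipschitz contribution by the local supremum $\sup_{\tau\in(\sigma_i,\tau_{i+1}]}e^{-\theta\tau}|y-x|$ (respectively $\sup_{\tau\in(\tau_i,\sigma_i]}$), solves the resulting inequality on that interval to obtain \eqref{nal4.10}, \eqref{ne4.11}, \eqref{e4.9}, and then \emph{sums} these local bounds over all $2m+1$ pieces to produce the final constant $c_{f,\theta,h,\alpha,\beta,h_i,\varphi}$. You instead bound the Lipschitz term globally by $L_1\|y-x\|_{PB}$, solve once for $\|y-x\|_{PB}$, and reinsert. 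Your route is slightly cleaner bookkeeping-wise and yields a somewhat different (but equivalent up to constants) expression for $C_{f,\theta,h,\alpha,\beta,h_i,\varphi}$; the paper's piecewise absorption avoids the detour through $\varphi(T)$ in the intermediate step. Both arguments share the same mild looseness in keeping the final bound genuinely pointwise in $\varphi(\tau)$ rather than in $\varphi(T)$.
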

\begin{proof}
Let $y\in PC^{1}(J,\mathbb{R})$ be a solution of inequality \eqref{e2.3}. Then by lemma \ref{lm4.5}, $y$ satisfies the integral inequalities \begin{equation}\label{e4.5}
 \begin{cases}
   |y(\tau)-\mathcal{I}^{\beta}_{\tau_{i},\tau}h_{i}(\tau,y(\tau))|\leq \epsilon\psi,~ \tau\in(\tau_{i},\sigma_{i}],\; i=1,2,\cdots ,m,  \\
  \, \left|y(\tau)-  y(0)-\mathcal{I}^{\alpha}_{0,\tau} f\left(\tau,y(\tau),\int_{0}^{\tau}h(\sigma,y(\sigma)\right)d\sigma\right|\leq \epsilon c_{\varphi}\varphi(\tau), \;  \tau\in (0,\tau_{1}],\\
 \left|y(\tau)-\mathcal{I}^{\beta}_{\tau_{i},\sigma_{i}} h_{i}(\sigma_{i},y(\sigma_{i}))-\mathcal{I}^{\alpha}_{\sigma_{i},\tau} f\left(\tau,y(\tau),\int_{\sigma_{i}}^{\tau}h(\sigma,y(\sigma)\right)d\sigma\right|\leq \epsilon\left( \psi+c_{\varphi}\varphi(\tau)\right) ,\\
  \tau\in (\sigma_{i},\tau_{i+1}],\; i=1,2,\cdots m.
 \end{cases}
 \end{equation} 
 Denote by $x$ the classical solution of  fractional Volterra integrodifferential equations 
\begin{equation}\label{a4.5}
\begin{cases}
 ^c_{\sigma_{i}}\mathcal{D}_{\tau}^\alpha x(\tau)=f\left(\tau,x(\tau),\int_{\sigma_{i}}^{\tau}h(\sigma,x(\sigma))d\sigma\right),~ \tau\in(\sigma_{i},\tau_{i+1}],\; i=0,1,\cdots ,m,\\
  x(\tau)=\mathcal{I}^{\beta}_{\tau_{i},\tau}h_{i}(\tau,x(\tau)),~ \tau\in(\tau_{i},\sigma_{i}],\; i=1,2,\cdots ,m, \\
  x(0)=y(0)
\end{cases} 
\end{equation}
Then, in the view of lemma \ref{lm3.1}, $x$ satisfies the fractional integral equations
\begin{equation}\label{e14.6}
x(\tau) =
\begin{cases}
y(0),\;  \tau=0,\\
\mathcal{I}^{\beta}_{\tau_{i},\tau}h_{i}(\tau,x(\tau)),\; \tau\in(\tau_{i},\sigma_{i}],\; i=1,\cdots ,m , \\
y(0)+\mathcal{I}^{\alpha}_{0,\tau}f\left(\tau,x(\tau),\int_{0}^{\tau}h(\sigma,x(\sigma))d\sigma\right),\;\tau\in (0,\tau_{1}],\\
\mathcal{I}^{\beta}_{\tau_{i},\sigma_{i}}h_{i}(\sigma_{i},x(\sigma_{i}))+\mathcal{I}^{\alpha}_{\sigma_{i},\tau}f\left(\tau,x(\tau),\int_{\sigma_{i}}^{\tau}h(\sigma,x(\sigma))d\sigma\right),
\tau\in (\sigma_{i},\tau_{i+1}],i=1,\cdots m
\end{cases} 
\end{equation}
Proceeding as in the proof of Theorem \ref{Thm2}, for any $\tau\in (\sigma_{i},\tau_{i+1}],\; i=1,2,\cdots m$, we obtain 
\begin {align*}
&|y(\tau)-x(\tau)|e^{-\theta \tau}=\left| y(\tau)-\left( \mathcal{I}^{\beta}_{\tau_{i},\sigma_{i}}h_{i}(\sigma_{i},x(\sigma_{i}))+\mathcal{I}^{\alpha}_{\sigma_{i},\tau}f\left( \tau,x(\tau),\int_{\sigma_{i}}^{\tau}h(\sigma,x(\sigma))d\sigma\right) \right) \right| e^{-\theta \tau}\\
&\leq \left| y(\tau)- \mathcal{I}^{\beta}_{\tau_{i},\sigma_{i}}h_{i}(\sigma_{i},y(\sigma_{i}))-\mathcal{I}^{\alpha}_{\sigma_{i},t}f\left( \tau,y(\tau),\int_{\sigma_{i}}^{\tau}h(\sigma,y(\sigma))d\sigma\right)  \right|e^{-\theta \tau}\\
&\qquad+|\mathcal{I}^{\beta}_{\tau_{i},\sigma_{i}}h_{i}(\sigma_{i},y(\sigma_{i}))-\mathcal{I}^{\beta}_{\tau_{i},\sigma_{i}}h_{i}(\sigma_{i},x(\sigma_{i}))|e^{-\theta \tau}\\
&\qquad+\left| \mathcal{I}^{\alpha}_{\sigma_{i},\tau}f\left(\tau,y(\tau),\int_{\sigma_{i}}^{\tau}h(\sigma,y(\sigma))d\sigma\right)-\mathcal{I}^{\alpha}_{\sigma_{i},\tau}f\left( \tau,x(\tau),\int_{\sigma_{i}}^{\tau}h(\sigma,x(\sigma))d\sigma\right) \right| e^{-\theta \tau}\\
&=\epsilon\left( \psi+c_{\varphi}\varphi(\tau)\right)e^{-\theta \tau}+
\dfrac{L_{h_{i}}\omega_{2}}{\Gamma(\beta)(\theta \sigma_{1}^{*})^{\frac{1}{\sigma_{1}^{*}}}}
\left(\sup_{\tau\in (\tau_{i},\sigma_{i}]}e^{-\theta \tau}|y(\tau)-x(\tau)|\right)\\
&+\left[\frac{\omega_{1}M_{f}}{\Gamma(\alpha)(\theta \sigma^{*})^{\frac{1}{\sigma^{*}}}} +\frac{N_{f}K_{h}(\tau_{i+1}-\sigma_{i})^{\alpha+\frac{1}{2}}}{{\Gamma(\alpha+1)}\left( 2\alpha+1\right)^\frac{1}{2}(2\theta)^\frac{1}{2}}\right] \left(\sup_{\tau\in (\sigma_{i},\tau_{i+1}]}e^{-\theta \tau}|y(\tau)-x(\tau)|\right)
\end{align*}
This gives 
\begin{align*}
  &\sup_{\tau\in (\sigma_{i},\tau_{i+1}]}e^{-\theta \tau}|x(\tau)-y(\tau)|\\
  &\qquad\leq\epsilon (1+c_{\varphi})\left(  \psi +\varphi(\tau)\right)e^{-\theta \sigma_{i}} 
+\left(\dfrac{L_{h_{i}}\omega_{2}}{\Gamma(\beta)(\theta \sigma_{1}^{*})^{\frac{1}{\sigma_{1}^{*}}}}
+ \frac{\omega_{1}M_{f}}{\Gamma(\alpha)(\theta \sigma^{*})^{\frac{1}{\sigma^{*}}}}\right.\\
&\left.\qquad +\frac{N_{f}K_{h}(\tau_{i+1}-\sigma_{i})^{\alpha+\frac{1}{2}}}{{\Gamma(\alpha+1)}\left( 2\alpha+1\right)^\frac{1}{2}(2\theta)^\frac{1}{2}}\right) \sup_{\tau\in (\sigma_{i},\tau_{i+1}]}e^{-\theta \tau}|x(\tau)-y(\tau)|
\end{align*}
Therefore,
\begin{align}\label{nal4.10}
&\sup_{\tau\in (\sigma_{i},\tau_{i+1}]}e^{-\theta \tau}|x(\tau)-y(\tau)|\nonumber\\
&\leq\dfrac{\epsilon (1+c_{\varphi})\left(  \psi +\varphi(\tau)\right)e^{-\theta \sigma_{i}}}{ 1-\left(\dfrac{L_{h_{i}}\omega_{2}}{\Gamma(\beta)(\theta \sigma_{1}^{*})^{\frac{1}{\sigma_{1}^{*}}}}
 + \dfrac{\omega_{1}M_{f}}{\Gamma(\alpha)(\theta \sigma^{*})^{\frac{1}{\sigma^{*}}}}+\dfrac{N_{f}K_{h}(\tau_{i+1}-\sigma_{i})^{\alpha+\frac{1}{2}}}{{\Gamma(\alpha+1)}\left( 2\alpha+1\right)^\frac{1}{2}(2\theta)^\frac{1}{2}}\right)}
\end{align}
Now, for any $\tau\in (\tau_{i},\sigma_{i}], i=1,2,\cdots m$,
\begin{align*}
|y(\tau)-x(\tau)|e^{-\theta \tau}&\leq |y(\tau)-\mathcal{I}^{\beta}_{\tau_{i},\tau}h_{i}(\tau,x(\tau))|e^{-\theta \tau}\nonumber\\
&\leq |y(\tau)-\mathcal{I}^{\beta}_{\tau_{i},\tau}h_{i}(\tau,y(\tau))|e^{-\theta \tau}+|\mathcal{I}^{\beta}_{\tau_{i},\tau}h_{i}(\tau,y(\tau))-\mathcal{I}^{\beta}_{\tau_{i},\tau}h_{i}(\tau,x(\tau))|e^{-\theta \tau}\nonumber\\
&\leq\epsilon \psi e^{-\theta \tau_{i}}+\dfrac{L_{h_{i}}\omega_{2}}{\Gamma(\beta)(\theta \sigma_{1}^{*})^{\frac{1}{\sigma_{1}^{*}}}}\left( \sup_{\tau\in (\tau_{i},\sigma_{i}]}e^{-\theta \tau}|x(\tau)-y(\tau)|\right) 
\end{align*}
Therefore 
\begin{align*}
\sup_{\tau\in (\tau_{i},\sigma_{i}]}e^{-\theta \tau}|x(\tau)-y(\tau)|
&\leq\epsilon \psi e^{-\theta \tau_{i}}+\dfrac{L_{h_{i}}\omega_{2}}{\Gamma(\beta)(\theta \sigma_{1}^{*})^{\frac{1}{\sigma_{1}^{*}}}}\left( \sup_{\tau\in (\tau_{i},\sigma_{i}]}e^{-\theta \tau}|x(\tau)-y(\tau)|\right)
\end{align*}
Thus we have
\begin{equation}\label{ne4.11}
 \sup_{\tau\in (\tau_{i},\sigma_{i}]}e^{-\theta \tau}|x(\tau)-y(\tau)|
 \leq \dfrac{\epsilon\psi}{\left( 1-\dfrac{L_{h_{i}}\omega_{2}}{\Gamma(\beta)(\theta \sigma_{1}^{*})^{\frac{1}{\sigma_{1}^{*}}}}\right) } e^{-\theta \tau_{i}}
\end{equation}
Next, for any $\tau\in (0,\tau_{1}]$,
\begin{align*}
&|y(\tau)-x(\tau)|e^{-\theta \tau}\\
&\leq \left|y(\tau)-y(0)-\mathcal{I}^{\alpha}_{0,\tau}f\left( \tau,x(\tau),\int_{0}^{\tau}h(\sigma,x(\sigma))d\sigma\right) \right|e^{-\theta \tau}\nonumber\\
&\leq \left|y(\tau)-y(0)-\mathcal{I}^{\alpha}_{0,\tau}f\left( \tau,y(\tau),\int_{0}^{\tau}h(\sigma,y(\sigma))d\sigma\right) \right|e^{-\theta \tau}\nonumber\\
&\qquad+\mathcal{I}^{\alpha}_{0,\tau}\left|f(\tau,y(\tau),\int_{0}^{\tau}h(\sigma,y(\sigma))d\sigma)-f(\tau,x(\tau),\int_{0}^{\tau}h(\sigma,x(\sigma))d\sigma)\right|e^{-\theta \tau}\nonumber\\
& \leq \epsilon c_{\varphi}\varphi(t)+\left( \frac{\omega_{1}M_{f}}{\Gamma(\alpha)(\theta \sigma^{*})^{\frac{1}{\sigma^{*}}}} +\frac{N_{f}K_{h}}{{\Gamma(\alpha+1)}}\frac{\tau_{1}^{\alpha+\frac{1}{2}}}{\left( 2\alpha+1\right)^\frac{1}{2}(2\theta)^\frac{1}{2}}\right) \sup_{\tau\in (0,\tau_{1}]} e^{-\theta \tau}|x(\tau)-y(\tau)|
\end{align*}
Just computed as above, we get
\begin{align}\label{e4.9}
 \sup_{\tau\in (0,\tau_{1}]} |y(\tau)-x(\tau)|e^{-\theta \tau}
 \leq \frac{\epsilon c_{\varphi}\varphi(\tau) }{ 1-\left( \dfrac{\omega_{1}M_{f}}{\Gamma(\alpha)(\theta \sigma^{*})^{\frac{1}{\sigma^{*}}}} +\dfrac{N_{f}K_{h}}{{\Gamma(\alpha+1)}}\dfrac{\tau_{1}^{\alpha+\frac{1}{2}}}{\left( 2\alpha+1\right)^\frac{1}{2}(2\theta)^\frac{1}{2}}\right)} \nonumber\\
\end{align}
Since $$ J=(0,T]=\left[ \bigcup_{i=0}^{m}(\sigma_{i},\tau_{i+1}]\right] \left[ \bigcup_{i=1}^{m}(\tau_{i},\sigma_{i}]\right], $$
from the  inequalities \eqref{nal4.10},\eqref{ne4.11} and \eqref{e4.9}, we obtain 
\begin{align}\label{a4.13}
 \sup_{\tau\in (0,T]} |y(\tau)-x(\tau)|e^{-\theta \tau}
& \leq \sup_{\tau\in (0,\tau_{1}]} |y(\tau)-x(\tau)|e^{-\theta \tau}
+\sum_{i=1}^{m}  \sup_{\tau\in (\sigma_{i},\tau_{i+1}]}|y(\tau)-x(\tau)|e^{-\theta \tau}\nonumber\\
&\qquad+ \sum_{i=1}^{m}\sup_{\tau\in (\tau_{i},\sigma_{i}]} |y(\tau)-x(\tau)|e^{-\theta \tau}\nonumber \\
& \quad\leq \epsilon c_{f,\theta,h,\alpha,\beta,h_{i},\varphi}(\psi+\varphi(\tau))
\end{align}
where
\begin{align*}
c_{f,\theta,h,\alpha,\beta,h_{i},\varphi}&=\frac{ c_{\varphi} }{1-\left( \dfrac{\omega_{1}M_{f}}{\Gamma(\alpha)(\theta \sigma^{*})^{\frac{1}{\sigma^{*}}}} +\dfrac{N_{f}K_{h}}{{\Gamma(\alpha+1)}}\dfrac{\tau_{1}^{\alpha+\frac{1}{2}}}{\left( 2\alpha+1\right)^\frac{1}{2}(2\theta)^\frac{1}{2}}\right)  }+\large\sum_{i=1}^{m} \dfrac{\psi}{\left( 1-\dfrac{L_{h_{i}}\omega_{2}}{\Gamma(\beta)(\theta \sigma_{1}^{*})^{\frac{1}{\sigma_{1}^{*}}}}\right)} \\ 
&\qquad+\sum_{i=1}^{m} \dfrac{1+c_{\varphi}}{1-\left(\dfrac{L_{h_{i}}\omega_{2}}{\Gamma(\beta)(\theta \sigma_{1}^{*})^{\frac{1}{\sigma_{1}^{*}}}}
 + \dfrac{\omega_{1}M_{f}}{\Gamma(\alpha)(\theta \sigma^{*})^{\frac{1}{\sigma^{*}}}}+\dfrac{N_{f}K_{h}(\tau_{i+1}-\sigma_{i})^{\alpha+\frac{1}{2}}}{{\Gamma(\alpha+1)}\left( 2\alpha+1\right)^\frac{1}{2}(2\theta)^\frac{1}{2}}\right)}
\end{align*}
Finally, from inequality \eqref{a4.13}, we have
\begin{equation}\label{e4.14}
|y(\tau)-x(\tau)|e^{-\theta \tau}\leq \epsilon c_{f,\theta,h,\alpha,\beta,h_{i},\varphi}(\psi+\varphi(\tau)),\; \tau\in J
\end{equation}
This shows that equation \eqref{e1.1}  Bielecki-Ulam-Hyers-Rassias  stable with respect to $(\varphi, \psi)$.
\end{proof}
\begin{corollary} \label{c4.7}
Assume that hypotheses $(\tilde{H}1)$, $(H2)$, $(\tilde{H}3)$ and $(H4)$ holds. Then, the equation \eqref{e1.1} is  generalized Bielecki-Ulam-Hyers-Rassias stable with respect to $(\varphi, \psi)$, where $\alpha, \beta \in (0,1)$.
\end{corollary}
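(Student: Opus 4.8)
The plan is to reuse the proof of Theorem \ref{Thm4.5} almost verbatim, specialized to $\epsilon=1$. The only point at which one cannot simply quote an earlier statement is the very first step: Lemma \ref{lm4.5} is phrased for the inequality \eqref{e2.3}, whereas the generalized notion in Definition \ref{def4.4} is built on \eqref{e2.2}, which carries no factor $\epsilon$. So I would first re-run the (short) argument of Lemma \ref{lm4.5} with $\epsilon=1$: if $y\in PC^{1}(J,\mathbb{R})$ satisfies \eqref{e2.2}, there exist $H\in PC(J,\mathbb{R})$ and constants $H_{i}$ (depending on $y$) with $|H(\tau)|\le\varphi(\tau)$ on $J$ and $|H_{i}|\le\psi$, such that $y$ solves the corresponding perturbed system; applying Lemma \ref{lm3.1} to that system and then using $(H4)$ to replace $\mathcal{I}^{\alpha}_{0,\tau}\varphi$ by $c_{\varphi}\varphi(\tau)$, one gets exactly the integral inequalities \eqref{e4.5} with $\epsilon$ deleted.

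Next I would let $x$ be the classical solution of \eqref{a4.5} with $x(0)=y(0)$, which exists and is unique by Theorem \ref{Thm2} under $(\tilde{H}1)$, $(H2)$, $(\tilde{H}3)$; by Lemma \ref{lm3.1} it satisfies the integral equations \eqref{e14.6}. Then I would carry out the three-case estimate of $|y(\tau)-x(\tau)|e^{-\theta\tau}$ on the subintervals $(0,\tau_{1}]$, $(\tau_{i},\sigma_{i}]$ and $(\sigma_{i},\tau_{i+1}]$ exactly as in Theorem \ref{Thm4.5}, invoking $(\tilde{H}1)$, $(H2)$, $(\tilde{H}3)$, Lemma \ref{Pru}, the H\"older bounds \eqref{in3.7}, \eqref{e3.17}, \eqref{e3.25}, and the monotonicity of $\varphi$; this produces the same recursive estimates \eqref{nal4.10}, \eqref{ne4.11}, \eqref{e4.9} but with $\epsilon=1$. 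Choosing $\theta$ large enough that each of the three denominators appearing there is positive (possible for every $\alpha,\beta\in(0,1)$, since each denominator tends to $1$ as $\theta\to\infty$, exactly the argument used at the end of Theorem \ref{Thm2}), then summing over the decomposition $J=\bigcup_{i=0}^{m}(\sigma_{i},\tau_{i+1}]\cup\bigcup_{i=1}^{m}(\tau_{i},\sigma_{i}]$ and using that $\varphi$ is nondecreasing to pull the suprema out, I obtain $|y(\tau)-x(\tau)|e^{-\theta\tau}\le C_{f,\theta,h,\alpha,\beta,h_{i},\varphi}\,(\psi+\varphi(\tau))$ for all $\tau\in J$, where $C_{f,\theta,h,\alpha,\beta,h_{i},\varphi}$ is precisely the constant displayed at the end of the proof of Theorem \ref{Thm4.5}. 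By Definition \ref{def4.4} this is the asserted generalized Bielecki--Ulam--Hyers--Rassias stability with respect to $(\varphi,\psi)$.

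Since the whole argument is a specialization of an already proved theorem, there is no genuinely new difficulty; the only care needed is the bookkeeping just described — re-deriving the $\epsilon=1$ version of Lemma \ref{lm4.5} for \eqref{e2.2} and verifying that the single choice of $\theta$ makes all the denominators in \eqref{nal4.10}, \eqref{ne4.11}, \eqref{e4.9} simultaneously positive, which is immediate. Hence the corollary follows.
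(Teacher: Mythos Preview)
Your proposal is correct and follows essentially the same approach as the paper: the paper's entire proof is the single line ``Set $\epsilon=1$ in the proof of Theorem \ref{Thm4.5}'', yielding \eqref{e4.14} without the factor $\epsilon$. Your version is in fact more careful than the paper's, since you explicitly address the bookkeeping point that Lemma \ref{lm4.5} is stated for \eqref{e2.3} rather than \eqref{e2.2} and that the denominators must be made positive by choosing $\theta$ large --- neither of which the paper spells out.
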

\begin{proof}
Set $\epsilon=1$ in the proof of Theorem 5.3, we obtain
\begin{align}
|y(\tau)-x(\tau)|e^{-\theta \tau}\leq c_{f,\theta,h,\alpha,\beta,{i},\varphi}(\psi+\varphi(\tau)),\; \tau\in J.
\end{align}
This proves the equation \eqref{e1.1} is  generalized Bielecki-Ulam-Hyers-Rassias stable with respect to $(\varphi, \psi)$.
\end{proof}
\begin{corollary}\label{c4.8}
Assume that hypotheses $(\tilde{H}1)$, $(H2)$, $(\tilde{H}3)$ and $(H4)$ holds. Then, the equation \eqref{e1.1} is   Bielecki-Ulam-Hyers stable, where $\alpha, \beta \in (0,1)$.
\end{corollary}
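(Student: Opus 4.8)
The plan is to obtain Bielecki--Ulam--Hyers stability as a direct specialization of the Bielecki--Ulam--Hyers--Rassias stability already established in Theorem \ref{Thm4.5}. The key observation is that the inequality system \eqref{e2.1} is exactly the inequality system \eqref{e2.3} in the particular case $\varphi(\tau)\equiv 1$ and $\psi=1$, because then $\epsilon\varphi(\tau)=\epsilon$ and $\epsilon\psi=\epsilon$. Hence every $y\in PC^{1}(J,\mathbb{R})$ satisfying \eqref{e2.1} is automatically a solution of \eqref{e2.3} for this choice of $(\varphi,\psi)$, and we are reduced to checking that this choice is admissible and then reading off the constant.

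First I would verify that the constant function $\varphi\equiv 1$ is admissible for hypothesis (H4): it is continuous and nondecreasing, and $\mathcal{I}^{\alpha}_{0,\tau}\varphi=\tau^{\alpha}/\Gamma(\alpha+1)\le T^{\alpha}/\Gamma(\alpha+1)$ for all $\tau\in J$, so (H4) holds with $c_{\varphi}=T^{\alpha}/\Gamma(\alpha+1)$. Together with the standing assumptions $(\tilde H1)$, $(H2)$, $(\tilde H3)$, all hypotheses of Theorem \ref{Thm4.5} are then in force for the pair $(\varphi,\psi)=(1,1)$.

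Next I would apply Theorem \ref{Thm4.5} verbatim: given a solution $y$ of \eqref{e2.1}, let $x$ be the classical solution of \eqref{e1.1} determined by $x(0)=y(0)$ (the function constructed in \eqref{a4.5}); then
\[
|y(\tau)-x(\tau)|e^{-\theta\tau}\le \epsilon\, c_{f,\theta,h,\alpha,\beta,h_{i},\varphi}\,(\psi+\varphi(\tau))=2\,\epsilon\, c_{f,\theta,h,\alpha,\beta,h_{i},\varphi},\qquad \tau\in J,
\]
where $c_{f,\theta,h,\alpha,\beta,h_{i},\varphi}$ is the constant displayed at the end of the proof of Theorem \ref{Thm4.5}, now evaluated at $\varphi\equiv1$, $\psi=1$, $c_{\varphi}=T^{\alpha}/\Gamma(\alpha+1)$. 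Setting $C_{f,\theta,h,\alpha,\beta,h_{i}}:=2\,c_{f,\theta,h,\alpha,\beta,h_{i},\varphi}$, a positive real number depending only on the indicated data, gives $|y(\tau)-x(\tau)|e^{-\theta\tau}\le C_{f,\theta,h,\alpha,\beta,h_{i}}\,\epsilon$ on $J$, which is precisely Definition \ref{def4.1}.

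Since every step is a substitution into an already-proved estimate, there is no genuine obstacle; the only point needing a line of verification is the admissibility of $\varphi\equiv1$ in (H4), which is immediate. As an alternative one could simply rerun the three-region argument in the proof of Theorem \ref{Thm4.5} with $\varphi$ and $\psi$ both replaced by $1$ and with $\epsilon$ kept as a multiplicative factor, but invoking the theorem directly is cleaner and shorter.
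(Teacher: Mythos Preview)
Your proposal is correct and follows essentially the same route as the paper: set $\varphi\equiv 1$, $\psi=1$, check that this choice satisfies (H4) with $c_{\varphi}=T^{\alpha}/\Gamma(\alpha+1)$, and read off the Bielecki--Ulam--Hyers estimate from Theorem~\ref{Thm4.5}. The only cosmetic difference is that you make the factor $\psi+\varphi(\tau)=2$ explicit before absorbing it into the constant, whereas the paper writes the final constant directly as $c_{f,\theta,h,\alpha,\beta,h_{i}}$.
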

\begin{proof}
Set $\psi=1$ and $\varphi(\tau)=1,\; \tau\in J$ in theorem proof of Theorem \ref{Thm4.5}. Then $\varphi \in C(J,\mathbb{R}_+)$ and 
$\mathcal{I}^{\alpha}_{0,\tau} \varphi\leq c_\varphi \varphi(\tau),\; \tau\in J,$
where $c_\varphi=\dfrac{T^{\alpha}}{\Gamma(1+\alpha)}$. Thus the hypothesis $(H4)$ is satisfied. Further, we  have
\begin{equation} \label{BUH}
|y(\tau)-x(\tau)|e^{-\theta \tau}\leq \epsilon c_{f,\theta,h,\alpha,\beta,h_{i}},\; \tau\in J.
\end{equation}
Therefore the equation \eqref{e1.1} is   Bielecki-Ulam-Hyers stable.
\end{proof}
\begin{corollary}
Assume that hypotheses $(\tilde{H}1)$, $(H2)$, $(\tilde{H}3)$ and $(H4)$ holds. Then, the equation \eqref{e1.1} is  generalized Bielecki-Ulam-Hyers stable, where $\alpha, \beta \in (0,1)$.
\end{corollary}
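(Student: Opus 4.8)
The plan is to deduce this statement directly from Corollary~\ref{c4.8} together with Definition~\ref{def4.2}, so no fresh estimates are needed. \textbf{Step 1.} Fix $\epsilon>0$ and let $y\in PC^{1}(J,\mathbb{R})$ be an arbitrary solution of the inequality \eqref{e2.1}. As in the proof of Theorem~\ref{Thm4.5}, let $x\in PC^{1}(J,\mathbb{R})$ be the classical solution of \eqref{e1.1} singled out by the initial condition $x(0)=y(0)$; its existence and uniqueness are guaranteed by Theorem~\ref{Thm2} under $(\tilde{H}1)$, $(H2)$, $(\tilde{H}3)$.

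\textbf{Step 2.} Reuse the computation in Corollary~\ref{c4.8}: taking $\psi=1$ and $\varphi(\tau)\equiv 1$, hypothesis $(H4)$ holds with $c_{\varphi}=T^{\alpha}/\Gamma(1+\alpha)$, and (after choosing $\theta$ large enough that all the denominators appearing in \eqref{nal4.10}, \eqref{ne4.11} and \eqref{e4.9} are positive) the estimate \eqref{BUH} provides a constant $c_{f,\theta,h,\alpha,\beta,h_{i}}>0$ with
\[
|y(\tau)-x(\tau)|e^{-\theta \tau}\leq \epsilon\, c_{f,\theta,h,\alpha,\beta,h_{i}},\qquad \tau\in J.
\]

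\textbf{Step 3.} Define $\theta_{f,\theta,h,\alpha,\beta,h_{i}}:\mathbb{R}_{+}\to\mathbb{R}_{+}$ by $\theta_{f,\theta,h,\alpha,\beta,h_{i}}(s)=c_{f,\theta,h,\alpha,\beta,h_{i}}\,s$. Since $c_{f,\theta,h,\alpha,\beta,h_{i}}>0$, this map is linear, hence continuous, nonnegative on $\mathbb{R}_{+}$, and satisfies $\theta_{f,\theta,h,\alpha,\beta,h_{i}}(0)=0$, so $\theta_{f,\theta,h,\alpha,\beta,h_{i}}\in C(\mathbb{R}_{+},\mathbb{R}_{+})$. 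Combining this with the display in Step~2 gives
\[
|y(\tau)-x(\tau)|e^{-\theta \tau}\leq \theta_{f,\theta,h,\alpha,\beta,h_{i}}(\epsilon),\qquad \tau\in J,
\]
which is precisely the requirement of Definition~\ref{def4.2}. Hence equation \eqref{e1.1} is generalized Bielecki--Ulam--Hyers stable.

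\textbf{Expected main obstacle.} There is essentially no analytic difficulty; the entire content is already carried by Corollary~\ref{c4.8}, and the only thing left to verify is that the comparison function may be chosen in $C(\mathbb{R}_{+},\mathbb{R}_{+})$ with value $0$ at $0$, which is immediate for the linear choice above. The single point that needs (already-established) care is that $\theta$ must be taken large enough for $c_{f,\theta,h,\alpha,\beta,h_{i}}$ to be a genuine positive real number rather than a formal fraction with nonpositive denominator; this is exactly the selection of $\theta$ performed in the proofs of Theorems~\ref{Thm2} and \ref{Thm4.5}, so it requires no new work here.
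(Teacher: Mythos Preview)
Your proposal is correct and follows essentially the same approach as the paper: both arguments invoke the estimate \eqref{BUH} from Corollary~\ref{c4.8} and then define the comparison function $\theta_{f,\theta,h,\alpha,\beta,h_{i}}(\epsilon)=c_{f,\theta,h,\alpha,\beta,h_{i}}\,\epsilon$, checking that it is continuous with value $0$ at $0$. Your write-up is in fact more explicit than the paper's about the role of $\theta$ and the existence of $x$, but the logical content is identical.
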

\begin{proof}
Define $\mathbb{\theta}_{f,\theta,h,\alpha,\beta,h_{i}}:\mathbb{R}_{+}\to \mathbb{R}_{+}$  by $\mathbb{\theta}_{f,\theta,h,\alpha,\beta,h_{i}}(\epsilon)=\epsilon c_{f,\theta,h,\alpha,\beta,h_{i}}.$
Then $ \mathbb{\theta}_{f,\theta,h,\alpha,\beta,h_{i}}\in C(\mathbb{R_{+}},\mathbb{R_{+}})$ and $\mathbb{\theta}_{f,\theta,h,\alpha,\beta,h_{i}}(0)=0.$
Further from the inequation \eqref{BUH}, we have
$$|y(\tau)-x(\tau)|e^{-\theta t}\leq \theta(\epsilon),\; \tau\in J,$$
which shows that \eqref{e1.1} is  generalized Bielecki-Ulam-Hyers stable. 
\end{proof}
\begin{remark}
Under the assumptions of Theorem \ref{Thm1} one can obtain the Bielecki-Ulam-Hyers stability and Bielecki-Ulam-Hyers-Rassias stability of equation when $\alpha, \beta \in (\frac{1}{2},1).$
\end{remark}
 
\section{Example}
\noindent \textbf{Example 5.1:}
Consider Caputo fractional differential equation with fractional integrable impulse
\begin{align}\label{ex1}
^{c}_{\sigma_{i}}\mathcal{D}_{\tau}^{\frac{2}{3}}x(\tau)&= f\left(\tau,x(\tau),\int_{\sigma_{i}}^{\tau}h( \sigma,x(\sigma))d\sigma\right), \tau\in (\sigma_{i},\tau_{i+1}],i=0,1\\
x(\tau)&= \mathcal{I}_{\tau_{1},\tau}^{\frac{2}{3}}h_{1}(\tau,x(\tau)), \;\tau\in (\tau_{1},\sigma_{1}], \label{ex2}\\
x(0)&=0 \label{ex3}
\end{align}
where $ 0=\sigma_{0}=\tau_{0}<\tau_{1}=1< \sigma_{1}=2< \tau_{2}=3$  and the  functions $f:[0,3]\times \mathbb{R}\times \mathbb{R}\to\mathbb{R}; ~h:[0,3]\times \mathbb{R}\to\mathbb{R},~h_{1}:(1,2]\times \mathbb{R}\to\mathbb{R}$  are defined as follows:
\begin{align*}
f\left(\tau,x(\tau),\int_{\sigma_{i}}^{\tau}h(\sigma,x(\sigma))d\sigma\right)&=g(\tau)+ \frac{e^{-\tau^{2}}}{4}\left( \sin(x(\tau))+\cos x(\tau)\right)+\int_{\sigma_{i}}^{\tau}\frac{\sigma}{e^{\sigma^{2}}}\sin(x(\sigma))d\sigma\\
h(\tau,x(\tau))&=\frac{\tau}{e^{\tau^{2}}}\sin(x(\tau))\\
h_{1}(\tau,x(\tau))&= \dfrac{1}{\Gamma(\frac{4}{3})} \frac{\tau (\tau-1)^{\frac{1}{3}}}{(\tau-4)}\left( \dfrac{|x(\tau)|-3}{|x(\tau)|+1}\right)
\end{align*}
where 
\begin{equation*}
g(\tau)=
 \begin{cases}
  & \dfrac{9}{2\Gamma(\frac{1}{3})}\tau^{\frac{4}{3}}-\frac{1}{4},\quad \tau\in [0,1]
 \\
 & 0,\;\tau\in (1,2]
 \\
   &\dfrac{9}{2\Gamma(\frac{1}{3})}(\tau-2)^{\frac{4}{3}}-\frac{1}{4e^{4}}\left( \cos 4+\sin 4\right),\; \;  \tau\in (2,3],\\
 \end{cases}
 \end{equation*} 
For any $x,y,\bar{x},\bar{y}\in PC(J,\mathbb{R})$ and $\tau\in [0,3] $,
 \begin{align*}
|& f(\tau,x,y)-f(\tau,\bar{x},\bar {y})|\\
&=\left| \left( g(\tau)+ \frac{e^{-\tau^{2}}}{4}\left( \sin x+\cos x\right)+y\right)  -\left( g(\tau)+\frac{e^{-\tau^{2}}}{4}\left( \sin \bar{x}+\cos \bar{x}\right)+\bar{y}\right) \right| \\
&\leq\frac{e^{-\tau^{2}}}{4}\left(|\sin x-\sin \bar{x}|+|\cos x-\cos \bar{x}| \right)+|y-\bar{y}| \\
& = \frac{e^{-\tau^{2}}}{4}\left(\left|2\cos\left(\frac{x+\bar{x}}{2}\right) \sin\left(\frac{x-\bar{x}}{2}\right)\right|+\left|2\sin\left(\frac{x+\bar{x}}{2}\right) \sin(\frac{x-\bar{x}}{2})\right|  \right)+ |y-\bar{y}|\\
& \leq \frac{e^{-\tau^{2}}}{2} |x-\bar{x}|+|y-\bar{y}| \leq \frac{1}{2}|x-\bar{x}|+|y-\bar{y}| 
 \end{align*}
Further, for  $\tau\in [1,2]$ 
\begin{align*}
|h_{1}(\tau,x)-h_{1}(\tau,\bar{x})|&=\left| \dfrac{1}{\Gamma(\frac{4}{3})} \frac{\tau (\tau-1)^{\frac{1}{3}}}{(\tau-4)}\left( \dfrac{|x|-3}{|x|+1}\right)-\dfrac{1}{\Gamma(\frac{4}{3})} \frac{\tau (\tau-1)^{\frac{1}{3}}}{(\tau-4)}\left( \dfrac{|y|-3}{|y|+1}\right)\right| \\
&\leq \frac{1}{\Gamma(\frac{4}{3})}\left|\dfrac{|x|-3}{|x|+1}-\dfrac{|y|-3}{|y|+1} \right| \leq\frac{4}{\Gamma(\frac{4}{3})} \dfrac{||x|-|y||}{(|x|+1)(|y|+1)} \\
&\leq\frac{4}{\Gamma(\frac{4}{3})}|x-y|
\end{align*}
and  for $\tau\in [0,3] $, we have 
\begin{align*}
|h(\tau,x(\tau))-h(\tau,\bar{x})|&=\left|\frac{\tau}{e^{\tau^{2}}}\sin x-\frac{\tau}{e^{\tau^{2}}}\sin \bar{x}\right|= \frac{\tau}{e^{\tau^{2}}}\left|\sin x- \sin \bar{x}\right|\\
& =\frac{\tau}{e^{\tau^{2}}}\left|\sin x- \sin \bar{x}\right|=\frac{\tau}{e^{\tau^{2}}}\left| 2\cos(\frac{x+\bar{x}}{2})\sin(\frac{x-\bar{x}}{2})\right| \\
 &\leq6\left|\frac{x-\bar{x}}{2}\right|=3|x-\bar{x}|
\end{align*}
Thus $f,h$ and $h_{1}$ satisfies Lipschitz condition with Lipschitz constants
$$M_{f}=\frac{1}{2}, ~N_{f}=1,~K_{h}=3,~L_{h_{1}}=\dfrac{4}{\Gamma(\frac{4}{3})}. $$
Thus by  Theorem \ref{Thm1}, the problem \eqref{ex1}-\eqref{ex3}  has a unique  solution on $[0,3]$.\\
Let $\varphi(\tau)=2.8361\mathbb{E}_{\frac{2}{3}}(\tau^{\frac{2}{3}}),\; \tau\in J$ and $\psi=0$.
Then $\varphi \in C(J,\mathbb{R}_+)$ is nondecreasing and satisfy the condition
\begin{align*}
\mathcal{I}_{0,\tau}^{\frac{2}{3}}\varphi(\tau)=2.8361\mathcal{I}_{\sigma_{i},\tau}^{\frac{2}{3}}(\mathbb{E}_{\frac{2}{3}}(\tau^{\frac{2}{3}}))\leq 2.8361\mathbb{E}_{\frac{2}{3}}(\tau^{\frac{2}{3}})=c_{\varphi}\varphi(\tau), \tau \in J, 
\end{align*}
where $c_{\varphi}=1$. Note that all the assumptions of the Corollary \ref{c4.7} holds. therefore problem 
\eqref{ex1}-\eqref{ex2} is   generalized Bielecki-Ulam-Hyers-Rassias stable with respect to $(\varphi, \psi)$.

Next, we shall discuss the generalized Bielecki-Ulam-Hyers-Rassias stablity of the equation \eqref{ex1}-\eqref{ex2} by showing that there exist an exact solution $x(\tau)$ of the problem \eqref{ex1}-\eqref{ex3} corresponding to  $(\varphi, \psi)$ and the given solution $y(\tau)$ of the inequalities
\begin{align}
\left| ^{c}_{\sigma_{i}}\mathcal{D}_{\tau}^{\frac{2}{3}}y(\tau)- f(\tau,y(\tau),\int_{\sigma_{i}}^{\tau}h(\sigma,y(\sigma))d\sigma)\right|&\leq \varphi(\tau)  , \tau\in (\sigma_{i},\tau_{i+1}],i=0,1\label{a5.4}\\
\left| y(\tau)- \mathcal{I}_{\tau_{1},\tau}^{\frac{2}{3}}h_{1}(\tau,y(\tau))\right| &\leq 0, \;\tau\in (\tau_{1},\sigma_{1}] \label{a5.5}.
\end{align}
where $ 0=\sigma_{0}=\tau_{0}<\tau_{1}=1< \sigma_{1}=2< \tau_{2}=3$.

Let
\begin{equation*}
y(\tau)=
\begin{cases}
& \tau,\; \tau\in [0,1]\cup (2,3]\\
&(\tau-1),\; \tau\in(1,2]
\end{cases}
\end{equation*}
Then for $\tau\in(0,1]$, 
\begin{align*}
&\left| ^{c}_{0}\mathcal{D}_{\tau}^{\frac{2}{3}}y(t)- g(\tau)- \frac{e^{-\tau^{2}}}{4}\left( \sin y(\tau)+\cos y(\tau)\right)-\int_{0}^{\tau}\frac{\sigma}{e^{\sigma^{2}}}\sin y(\sigma)d\sigma \right|\nonumber\\
&=\left| ^{c}_{0}\mathcal{D}_{\tau}^{\frac{2}{3}}(\tau)-\dfrac{9}{2\Gamma(\frac{1}{3})}\tau^{\frac{4}{3}}+\frac{1}{4} - \frac{e^{-\tau^{2}}}{4}\left( \sin \tau+\cos \tau\right)-\int_{0}^{\tau}\frac{\sigma}{e^{\sigma^{2}}}\sin\sigma  d\sigma \right|\nonumber\\
&\leq \dfrac{1}{\Gamma(\frac{4}{3})}\tau^{\frac{1}{3}}+\dfrac{9}{2\Gamma(\frac{1}{3})}\tau^{\frac{4}{3}}+\frac{1}{4} + \frac{e^{-\tau^{2}}}{4}\left( |\sin \tau|+|\cos \tau|\right)+\int_{0}^{\tau}\frac{\sigma}{e^{\sigma^{2}}}|\sin\sigma|  d\sigma \nonumber\\
&\leq 4.5495
\end{align*}  
and for $\tau\in(2,3]$
\begin{align*}
&\left| ^{c}_{2}\mathcal{D}_{\tau}^{\frac{2}{3}}y(\tau)- g(\tau)- \frac{e^{-\tau^{2}}}{4}\left( \sin(y(\tau))+\cos y(\tau)\right)-\int_{2}^{\tau}\frac{\sigma}{e^{\sigma^{2}}}\sin(y(\sigma))d\sigma \right|\nonumber\\
&=\left| ^{c}_{2}\mathcal{D}_{\tau}^{\frac{2}{3}}[(\tau-2)+2]-\dfrac{9}{2\Gamma(\frac{1}{3})}(\tau-2)^{\frac{4}{3}}+\frac{1}{4e^{4}}\left( \cos 4+\sin 4\right)\right.\nonumber\\
&\left.\qquad- \frac{e^{-\tau^{2}}}{4}\left( \sin \tau+\cos \tau\right)-\int_{2}^{\tau}\frac{\sigma}{e^{\sigma^{2}}}\sin \sigma d\sigma \right|\nonumber\\
&\leq  \dfrac{1}{\Gamma(\frac{4}{3})}(\tau-2)^{\frac{1}{3}}+0+\dfrac{9}{2\Gamma(\frac{1}{3})}(\tau-2)^{\frac{4}{3}}\nonumber\\
&\qquad+\frac{1}{4e^{4}}\left( |\cos 4|+|\sin 4|\right)+ \frac{1}{4e^{4}}\left(| \sin \tau|+|\cos \tau|\right)+\int_{2}^{\tau}\frac{\sigma}{e^{\sigma^{2}}}|\sin \sigma| d\sigma \nonumber\\
&\leq 2.8361
\end{align*}
Therefore for  $\tau\in (\sigma_{i},\tau_{i}],\; i=0,1$, we have 
\begin{align*}
&\left| ^{c}_{\sigma_{i}}\mathcal{D}_{\tau}^{\frac{2}{3}}y(\tau)- g(\tau)- \frac{e^{-\tau^{2}}}{4}\left( \sin y(\tau)+\cos y(\tau)\right)-\int_{\sigma_{i}}^{\tau}\frac{\sigma}{e^{\sigma^{2}}}\sin y(\sigma)d\sigma \right|\\
&\qquad\leq \min \left\lbrace  4.5495,2.8361\right\rbrace =2.8361\leq2.8361\mathbb{E}_{\frac{2}{3}}(\tau^{\frac{2}{3}})=\varphi(\tau)
\end{align*}
Also, for $\tau\in (1,2]$, we get 
\begin{align*} |y(\tau)-\mathcal{I}_{1,\tau}^{\frac{2}{3}}(h_{1}(\tau,y(\tau)))|=|(\tau-1)-(\tau-1)|=0
\end{align*} 
Hence $y(\tau)$ is a solution of an inequality \eqref{a5.4}-\eqref{a5.5}.

Next, one can easily verify that 
 \begin{equation*}
x(\tau)=
\begin{cases}
& \tau^{2},\quad \tau\in (0,1]\cup (2,3]
\\
&(\tau-1),\quad  \tau\in (1,2],
\end{cases}
\end{equation*}
is the unique solution of the problem \eqref{ex1}-\eqref{ex3}.

As discussed in the proof of the Theorem \ref{Thm1}, we have 
\begin {align*}
 L&= \max\left\lbrace \frac{L_{h_{i}}}{\Gamma(\beta)} \frac{(\sigma_{i}-\tau_{i})^{\beta-\frac{1}{2}}}{\sqrt{(2\beta-1)}\sqrt{2\theta}}+\frac{M_{f}}{\Gamma(\alpha)}\frac{(\tau_{i+1}-\sigma_{i})^{\alpha-\frac{1}{2}}}{\sqrt{(2\alpha-1)}\sqrt{2\theta}}  
+\frac{N_{f}L_{h_{i}}}{{\Gamma(\alpha+1)}} \frac{(\tau_{i+1}-\sigma_{i})^{\alpha+\frac{1}{2}}}{\sqrt{2\alpha+1}\sqrt{2\theta}} ; i=0,1\right\rbrace \\
&=\max\left\lbrace \frac{M_{f}}{\Gamma(\alpha)}\frac{1}{\sqrt{(2\alpha-1)}\sqrt{2\theta}}+\frac{N_{f}K_{h}}{{\Gamma(\alpha+1)}} \frac{1}{\sqrt{2\alpha}\sqrt{2\theta}},\right.\\ &\qquad\qquad\left.
\qquad \frac{L_{h_{1}}}{\Gamma(\beta)} \frac{1}{\sqrt{(2\beta-1)}\sqrt{2\theta}}+\frac{M_{f}}{\Gamma(\alpha)}\frac{1}{\sqrt{(2\alpha-1)}\sqrt{2\theta}}  
+\frac{N_{f}L_{h_{1}}}{{\Gamma(\alpha+1)}} \frac{1}{\sqrt{2\alpha+1}\sqrt{2\theta}} ; \right\rbrace\\
&=\frac{7.5188\sqrt{3}}{\Gamma(\frac{2}{3})\sqrt{2\theta}} 
 \end{align*}
Choose  $\theta>\left(\dfrac{7.5188\sqrt{3}}{\sqrt{2}\Gamma(\frac{2}{3})}\right)^{2}=46.2473$ so that $L<1$. For this choice of $\theta$ , we have:
for $\tau\in (0,1],$
$$ |y(\tau)-x(\tau)|e^{-\theta \tau}=|\tau-\tau^{2}|e^{-\theta \tau}\leq (\tau+\tau^{2})\leq 2 ,$$
 for $\tau\in (1,2],$
$$ |y(\tau)-x(\tau)|e^{-\theta \tau}=0,$$
and for $\tau\in (2,3],$
$$ |y(\tau)-x(\tau)|e^{-\theta \tau}=|\tau-\tau^{2}|e^{-\theta \tau}\leq (\tau+\tau^{2})e^{-2\theta }=11e^{-2\theta }.$$
Thus,$$ |y(\tau)-x(\tau)|e^{-\theta \tau}\leq C_{f,\theta,h,\alpha,\beta,h_{i},\varphi}(\psi+\varphi(\tau)) ,\;\; \tau\in J=[0,3],$$
where $ C_{f,\theta,h,\alpha,\beta,h_{i},~\varphi}=1, \psi=0~ \mbox{and}~ \varphi(\tau)=2.8361\mathbb{E}_{\frac{2}{3}}(\tau^{\frac{2}{3}})$.

\end{document}